\numberwithin{equation}{section} \theoremstyle{plain}
\newcommand{\Complex}{\mathbb C}
\newcommand{\Real}{\mathbb R}
\newcommand{\N}{\mathbb N}
\newcommand{\ddbar}{\overline\partial}
\newcommand{\pr}{\partial}
\newcommand{\ol}{\overline}
\newcommand{\set}[1]{\left\{#1\right\}}
\newcommand{\To}{\rightarrow}
\newtheorem{theorem}{Theorem}[section]
\newtheorem{lemma}[theorem]{Lemma}
\newtheorem{proposition}[theorem]{Proposition}
\newtheorem{definition}[theorem]{Definition}
\newtheorem{ass}[theorem]{Assumption}
\theoremstyle{definition}
\theoremstyle{remark}
\numberwithin{equation}{section}
\newcommand{\abs}[1]{\lvert#1\rvert}
\begin{document}

\title[On the coefficients of the equivariant Szeg\H{o} kernel asymptotic expansions]
{On the coefficients of the equivariant Szeg\H{o} kernel asymptotic expansions}

\author{Chin-Yu Hsiao}

\address{Institute of Mathematics, Academia Sinica , Astronomy-Mathematics Building, No. 1, Sec. 4, Roosevelt Road, Taipei 10617, Taiwan}
\thanks{The first author was partially supported by Taiwan Ministry of Science and Technology projects 108-2115-M-001-012-MY5 and 109-2923-M-001-010-MY4 and Academia Sinica Career Development Award. The second author was supported by Taiwan Ministry of Science and Technology projects 107-2115-M-008-007-MY2 and 109-2115-M-008-007-MY2.}

\email{chsiao@math.sinica.edu.tw or chinyu.hsiao@gmail.com}

\author{Rung-Tzung Huang}
\address{Department of Mathematics, National Central University, Chung-Li, Taoyuan 32001, Taiwan}

\email{rthuang@math.ncu.edu.tw}

\author{Guokuan Shao}

\address{School of Mathematics (Zhuhai), Sun Yat-sen University, Zhuhai 519082, Guangdong, China}

\email{shaogk@mail.sysu.edu.cn}

\keywords{equivariant Szeg\H{o} kernel, moment map, CR manifold} 
\subjclass[2010]{Primary: 58J52, 58J28; Secondary: 57Q10}

\begin{abstract}
Let $(X, T^{1,0}X)$ be a compact connected orientable strongly pseudoconvex CR manifold of dimension $2n+1$, $n\geq1$. Assume that $X$ admits a connected compact Lie group $G$ action and a transversal CR $S^1$ action,  we compute the coefficients of the first two lower order terms of the equivariant Szeg\H{o} kernel asymptotic expansions with respect to the $S^1$ action. 
\end{abstract}

\maketitle \tableofcontents


\section{Introduction and statement of the main results}\label{s-gue170124}

Let $(X, T^{1,0}X)$ be a CR manifold of dimension $2n+1$, $n\geq1$, and $\Box^{(q)}_b$  the Kohn Lalpacian acting on $(0,q)$ forms. The Szeg\H{o} kernel $S^{(q)}(x,y)$ is the distribution kernel of the orthogonal projection $S^{(q)}:L^2_{(0,q)}(X)\To {\rm Ker\,}\Box^{(q)}_b$. The study of Szeg\H{o} kernels is an important subject in several complex variables and CR geometry. Assume that $X$ is the boundary of a strongly pseudoconvex domain, Boutet de Monvel-Sj\"ostrand~\cite{BouSj76} proved that $S^{(0)}(x,y)$
is a complex Fourier integral operator. 
The first author \cite{Hsiao08} established Boutet de Monvel-Sj\"ostrand type theorems for $S^{(q)}(x,y)$, $q>0$, on a non-degenerate CR manifold.

When a CR manifold admits a compact Lie group $G$ action, the study of  $G$-equivariant Szeg\H{o} kernels is important in geometric quantization theory. 
Recently, Hsiao-Huang \cite{HH} obtained $G$-invariant Boutet de Monvel-Sj\"ostrand type theorems and Hsiao-Ma-Marinescu~\cite{HMM} established geometric quantization on CR manifolds by using  $G$-invariant Szeg\H{o} kernels asymptotic expansions. 
In this paper, we consider a strongly pseudoconvex CR manifold $X$ which admits a compact Lie group $G\times S^1$ action. Under certain assumptions of the group $G\times S^1$ action, by using the method in \cite{HH}, we can show that the $m$-th Fourier component of the $G$-equivariant Szeg\H{o} kernel $S_{k,m}:=S^{(0)}_{k,m}$ admits an asymptotic expansion in $m$. It is quite interesting to know the coefficients of the asymptotic expansion for $S_{k,m}$. In this work, we calculated the first two coefficients of the expansion of $S_{k,m}$. It should be mentioned that the coefficients of the asymptotic expansion for $S_{k,m}$ will be used in the study of $G$-equivariant Toeplitz operator. 

The calculation of coefficients of asymptotic expansions for Bergamn and Szeg\H{o} kernels is an active subject. Lu \cite{Lu} computed the coefficients of the first four lower order terms for Bergman kernel asymptotic expansions by using Tian's peak section method, see also Wang \cite{W}. Ma-Marinescu \cite{MM08a} used formal power series to compute coefficients for Bergman kernel asymptotic expansions, see also \cite{HHL18, Hsiao12, Hsiao16, LuW, MM06, MM12}. Ma-Zhang \cite{MZ} studied asymptotic expansions of $G$-invariant Bergman kernels and also computed the first two coefficients of the expansion. 


We now formulate the main results. We refer to Section~\ref{s:prelim} for some notations and terminology used here. Let $(X, T^{1,0}X)$ be a compact connected orientable strongly pseudoconvex CR manifold of dimension $2n+1$, $n\geq1$, where $T^{1,0}X$ denotes the CR structure of $X$. Assume that $X$ admits a $d$-dimensional connected compact Lie group $G$ action and a transversal CR $S^1$ action. Let $T$ be the global real vector field on $X$ induced by the $S^1$ action and let 
$\omega_0$ be the global one form given by \eqref{e-gue200803yydI} below. Let $J$ be the complex structure on $HX$ defined in the beginning of Section~\ref{s-gue200807yyd}.
Denote by $\mathfrak{g}$ the Lie algebra of $G$. For any $\xi \in \mathfrak{g}$, let $\xi_X$ be the vector field on $X$ induced by $\xi$. That is, $(\xi_X u)(x)=\frac{\partial}{\partial t}\left(u(\exp(t\xi)\circ x)\right)|_{t=0}$, for any $u\in C^\infty(X)$. Let $\underline{\mathfrak{g}}={\rm Span\,}(\xi_X;\, \xi\in\mathfrak{g})$.
We assume throughout the paper that

\begin{ass}\label{a-gue170123I}
\begin{equation}\label{e-gue200726yyd}
\mbox{The Lie group $G$ action is CR and preserves $\omega_0$ and $J$ and },
\end{equation}
\begin{equation}\label{e-gue170111ryI}
	\mbox{$T$ is transversal to the space $\underline{\mathfrak{g}}$ at every point $p\in\mu^{-1}(0)$},
	\end{equation}
	\begin{equation}\label{e-gue170111ryII}
	\mbox{$e^{i\theta}\circ g\circ x=g\circ e^{i\theta}\circ x$, for all $x\in X$, $\theta\in[0,2\pi[$, $g\in G$}. 
	\end{equation}
\end{ass}

We recall that the Lie group $G$ action preserves $\omega_0$ and $J$  means that 
$g^\ast\omega_0=\omega_0$ on $X$ and $g_\ast J=Jg_\ast$ on $HX$, for every $g\in G$, where $g^*$ and $g_*$ denote  the pull-back map and push-forward map of $G$, respectively. The $G$ action is CR means that for every $\xi_X\in\underline{\mathfrak{g}}$, 
	\begin{equation*}
		[\xi_X, C^{\infty}(X,T^{1,0}X)]\subset C^{\infty}(X,T^{1,0}X).
	\end{equation*}
\begin{definition}\label{d-gue170124}
	The momentum map associated to the form $\omega_0$ is the map $\mu:X \to \mathfrak{g}^*$ such that, for all $x \in X$ and $\xi \in \mathfrak{g}$, we have 
	\begin{equation*}\label{E:cmpm}
	\langle \mu(x), \xi \rangle = \omega_0(\xi_X(x)).
	\end{equation*}
\end{definition}

We also assume that 

\begin{ass}\label{a-gue170123II}
	$0$ is a regular value of $\mu$ and $G\times S^1$ acts freely near $\mu^{-1}(0)$. 
\end{ass}

By Assumption~\ref{a-gue170123II}, $\mu^{-1}(0)$ is a $d$-codimensional submanifold of $X$. In \cite{HH}, it was showed that $\mu^{-1}(0)/G$ is a CR manifold with natural CR structure induced by $T^{1,0}X$ of dimension $2n-2d+1$. 

Let $R=\{R_1,R_2,...\}$ be the collection of all irreducible unitary representations of $G$, including only one representation from each equivalent class. 
Write
\begin{equation*}\label{e-9291}
\begin{split}
R_k:G&\to GL(\mathbb{C}^{d_k}), \ \ d_k<\infty,\\
g&\to (R_{k,j,l}(g))_{j,l=1}^{d_k},
\end{split}
\end{equation*}
where $d_k$ is the dimension of the representation $R_k$.
Denote by $\chi_k(g):=\text{Tr}R_k(g)$ the trace of the matrix $R_k(g)$ (the character of $R_k$). 
Let $u\in\Omega^{0,q}(X)$. For every $k=1,2,\ldots$, define 
\begin{equation}\label{e-9292}
u_k(x)=d_k\int_G (g^\star u)(x)\overline{\chi_k(g)}d\mu(g),
\end{equation}
where $d\mu(g)$ is the probability Haar measure on $G$.
For every $k=1,2,\ldots$, 
set
\begin{equation}\label{e-9293}
\Omega^{0,q}(X)_k:=\{u(x)\in \Omega^{0,q}(X)| u(x)=u_k(x)\}.
\end{equation} 

The Levi form on $X$ induces a Hermitian metric $\langle\,\cdot\,|\,\cdot\,\rangle$ on $\Complex TX$ as follows: 
\begin{equation}\label{e-gue200726yyda}
\begin{split}
&\langle\,u\,|\,v\,\rangle=\frac{-1}{2i}\langle\,d\omega_0\,,\,u\wedge\ol v\,\rangle,\  \ u, v\in T^{1,0}X,\\
&T^{1,0}X\perp T^{0,1}X,\\
&\langle\,T\,|\,T\,\rangle=1,\ \ T\perp(T^{1,0}X\oplus T^{0,1}X).
\end{split}
\end{equation}
Let $(\,\cdot\,|\,\cdot\,)$ be the $L^2$ inner product on $\Omega^{0,q}(X)$ induced by $\langle\,\cdot\,|\,\cdot\,\rangle$. 
For every $m\in\mathbb Z$, $k=1,2,\ldots$, let
\begin{equation}\label{e-gue150508dIm}
\begin{split}
&\Omega^{0,q}_m(X):=\set{u\in\Omega^{0,q}(X);\, Tu=imu},\ \ q=0,1,2,\ldots,n,\\
&\Omega^{0,q}_{m}(X)_k=\set{u\in\Omega^{0,q}(X)_k;\, Tu=imu},\ \ q=0,1,2,\ldots,n.
\end{split}
\end{equation}
Let $L^2_{(0,q)}(X)_k$, $L^2_{(0,q),m}(X)$ and $L^2_{(0,q),m}(X)_k$ be the completions of $\Omega^{0,q}(X)_k$, $\Omega^{0,q}_m(X)$ and $\Omega^{0,q}_m(X)_k$ with respect to $(\,\cdot\,|\,\cdot\,)$ respectively. 
We denote $C^\infty(X)_k:=\Omega^{0,0}(X)_k$, $L^2(X)_k:=L^2_{(0,0)}(X)_k$, $C^\infty_m(X):=\Omega^{0,0}_m(X)$, $L^2_m(X):=L^2_{(0,0),m}(X)$, $C^\infty_m(X)_k:=\Omega^{0,0}_m(X)_k$, $L^2_m(X)_k:=L^2_{(0,0),m}(X)_k$.

Let $\ddbar_b: \Omega^{0,q}(X)\To\Omega^{0,q+1}(X)$ be the tangential Cauchy-Riemann operator. Since $G$ and $S^1$ actions are CR, $\ddbar_b: \Omega^{0,q}_m(X)_k\To\Omega^{0,q+1}_m(X)_k$, for every $m\in\mathbb Z$, $k=1,2,\ldots$. Hence, 
\[\begin{split}
\mbox{$\ddbar_b: {\rm Dom\,}\ddbar_b\bigcap L^2_m(X)_k\To L^2_{(0,1),m}(X)_k$, for every $m\in\mathbb Z$ and $k=1,2,\ldots$}. 
\end{split}\]
Let 
\[S_{k,m}:=S^{(0)}_{k,m}: L^2(X)\To {\rm Ker\,}\ddbar_b\bigcap L^2_m(X)_k\]
be the orthogonal projection with respect to $(\,\cdot\,|\,\cdot\,)$. We call $S_{k,m}$ the $m$-th Fourier component of the $G$-equivariant Szeg\H{o} 
kernel. Since the $S^1$ action is transversal, it is easy to see that ${\rm Ker\,}\ddbar_b\bigcap L^2_m(X)_k$ is a finite dimensional subspace of $C^\infty_m(X)_k$ and hence $S_{k,m}$ is smoothing. Let $S_{k,m}(x,y)\in C^\infty(X\times X)$ be the distribution kernel of $S_{k,m}$. We can repeat the procedure as in~\cite{HH} and deduce that for every $x\in\mu^{-1}(0)$, 
\begin{equation}\label{e-gue200731yyd}
\begin{split}
S_{k,m}(x,x)\sim\sum^{+\infty}_{j=0}m^{n-\frac{d}{2}-j}b_{j,k}(x)\  \ \mbox{in $S^{n-\frac{d}{2}}_{{\rm loc\,}}(1; \mu^{-1}(0)\times\mu^{-1}(0))$},\\
b_{j,k}(x)\in C^\infty(\mu^{-1}(0)),\ \ j=0,1,\ldots. 
\end{split}
\end{equation}
(We refer the reader to Section~\ref{s-gue170111w} for the semi-classical notations used in \eqref{e-gue200731yyd} and Theorem~\ref{t-gue170128I} below.) The goal of this work is to compute the first two terms of the expansion \eqref{e-gue200731yyd}. More precisely, we have

\begin{theorem}\label{t-gue170128I}
	With the assumptions and notations used above, 
	let $p\in\mu^{-1}(0)$ and $U$ an open neighborhood of  $p$ with a local coordinate $x=(x_1,\ldots,x_{2n+1})$. 
	Then, as $m\To+\infty$, 
	\begin{equation}\label{e-gue170117pVIIIm}
	\begin{split}
	&S_{k,m}(x,y)=e^{im\Psi_k(x,y)}b_{k}(x,y,m)+O(m^{-\infty}),\\
	&b_{k}(x,y,m)\in S^{n-\frac{d}{2}}_{{\rm loc\,}}(1; U\times U),\\
	&\mbox{$b_{k}(x,y,m)\sim\sum^\infty_{j=0}m^{n-\frac{d}{2}-j}b_{j,k}(x,y)$ in $S^{n-\frac{d}{2}}_{{\rm loc\,}}(1; U\times U)$},\\
	&b_{j,k}(x,y)\in C^\infty(U\times U),\ \ j=0,1,2,\ldots,
	\end{split}
	\end{equation}
	$\Psi_k(x,y)\in C^\infty(U\times U)$, $d_x\Psi_k(x,x)=-d_y\Psi_k(x,x)=-\omega_0(x)$, for every $x\in\mu^{-1}(0)$, ${\rm Im\,}\Psi_k\geq0$, $\Psi_k(x,y)=0$ if and only if $x=y\in\mu^{-1}(0)$.
	Moreover, we have
	\begin{equation}\label{e-0526}
	b_{0,k}(p)=\frac{1}{2}\pi^{\frac{d}{2}-n-1}2^{\frac{d}{2}}\frac{d_k^2}{V_{{\rm eff\,}}(p)}.
	\end{equation}
	\begin{equation}\label{e-05261}
	\begin{split}
b_{1,k}(p)=&
\frac{1}{4}\pi^{\frac{d}{2}-n-1}2^{\frac{d}{2}}   \frac{d_k^2}{V_{{\rm eff\,}}(p)}R(p)\\
&+ \frac{1}{4}\pi^{\frac{d}{2}-n-1}2^{\frac{d}{2}}  \frac{d_k}{V_{{\rm eff\,}}(p)^{1+\frac{2}{d}}} \Delta_{d\mu}\bar\chi_k(p)\\
&-\frac{1}{32}\pi^{\frac{d}{2}-n-1} 2^{\frac{d}{2}} \frac{d_k^2}{V_{{\rm eff\,}}(p)}(2(V_{{\rm eff\,}}(p))^{-\frac{2}{d}}S_G(p)-R_e(p)),
	\end{split}
	\end{equation}
where $R$ is the Tanaka-Webster scalar curvature with respect to the pseudohermitian structure $-\omega_0$ (cf. \eqref{e-gue200825twsc}), $\chi_k$ is the character of the $d_k$-dimensional irreducible representation $R_k$ of $G$ (cf. Subsection \ref{s-gue200803yyd}), $\Delta_{d\mu} $ is the Laplacian induced by the probability Haar measure on $G$, $S_G$ is the scalar curvature of $G$ induced by the probability Haar measure on $G$ (see \eqref{e-gue200913yyd}), $R_e$ is the scalar curvature of $X$ in the direction of $G$ (see Definition~\ref{d-gue200914yyd}) and 
$V_{{\rm eff\,}}$ is given by \eqref{e-gue170108e}.
\end{theorem}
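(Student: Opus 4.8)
The plan is to reduce the computation to the already-understood non-equivariant $S^1$-Fourier Szeg\H{o} kernel and then to extract the two coefficients by a stationary phase analysis along the group directions. Since the $G$-action is CR and preserves $\omega_0$ and $J$, and commutes with the $S^1$-action by \eqref{e-gue170111ryII}, the isotypic projection $P_k$, the $S^1$-Fourier projection and the Szeg\H{o} projection mutually commute; writing $S_m:=S^{(0)}_m$ for the $m$-th Fourier component of the ordinary Szeg\H{o} kernel and using \eqref{e-9292}, this gives
\begin{equation*}
S_{k,m}(x,y)=d_k\int_G\ol{\chi_k(g)}\,S_m(g\circ x,y)\,d\mu(g).
\end{equation*}
By the Boutet de Monvel--Sj\"ostrand theorem and its refinements by the first author, $S_m$ is a complex Fourier integral operator $S_m(x,y)=e^{im\phi(x,y)}s(x,y,m)+O(m^{-\infty})$ whose phase $\phi$ and whose first two symbol coefficients $s_0,s_1$ are explicitly known; in the present normalization $s_0=\tfrac12\pi^{-n-1}$ and $s_1=\tfrac14\pi^{-n-1}R$ with $R$ the Tanaka--Webster scalar curvature. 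I would take these as the input of the argument (they are the $d=0$ case of the claimed formulas).

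Substituting this description produces an oscillatory integral over $G$ with phase $g\mapsto\phi(g\circ x,y)$, which I would analyze by the stationary phase theorem for complex-valued phases as $m\To+\infty$. For $x=y=p\in\mu^{-1}(0)$, the identity $d_x\phi(p,p)\cdot\xi_X(p)=-\omega_0(\xi_X(p))=-\langle\mu(p),\xi\rangle=0$ shows that $g=e$ is a critical point, and since ${\rm Im\,}\phi=0$ forces $g\circ p=p$, the freeness in Assumption~\ref{a-gue170123II} identifies $\{e\}$ as the unique (nondegenerate) critical point. Computing the Hessian along $\underline{\mathfrak g}$ there identifies it with the Levi form restricted to the group directions, whose Gramian determinant gives $V_{{\rm eff\,}}(p)$, while the Gaussian integral over the $d$ transverse directions contributes $\pi^{d/2}2^{d/2}m^{-d/2}$. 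Together with $\ol{\chi_k(e)}=d_k$ and the leading symbol $s_0$, this yields the $m^{n-d/2}$ term and formula \eqref{e-0526} for $b_{0,k}(p)$.

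To obtain \eqref{e-05261} I would carry the expansion one order further and collect three contributions at order $m^{n-d/2-1}$. The first comes from the subleading symbol $s_1$ of $S_m$ and reproduces the line proportional to $R(p)$. The second comes from the second-order Taylor expansion of $\ol{\chi_k(g)}$ about $g=e$ in the Gaussian variables: because the Haar-Laplacian acts by $\Delta_{d\mu}$ and the metric along the group directions is rescaled by $V_{{\rm eff\,}}$, this produces the term containing $\Delta_{d\mu}\ol\chi_k(p)$ with its $V_{{\rm eff\,}}^{1+2/d}$ normalization. The third comes from the universal second-order correction in the stationary phase formula, involving the third and fourth derivatives of $\phi(g\circ x,y)$ along $G$ at the critical point together with the subprincipal variation of the amplitude; organized geometrically these produce the scalar curvature $S_G$ of $G$ and the curvature $R_e$ of $X$ in the group directions, giving the last line of \eqref{e-05261}.

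The main obstacle is precisely this last contribution: expressing the higher phase derivatives intrinsically as $S_G$ and $R_e$. I would handle it by choosing coordinates adapted simultaneously to $T$, to the complex structure $J$, and to the $G$-orbit through $p$ --- pseudohermitian normal coordinates together with an orthonormal basis of $\mathfrak g$ that is geodesic at $e$ --- so that the Taylor coefficients of $\phi$ and of the orbit map $g\mapsto g\circ p$ become directly identifiable with these curvature quantities. The ensuing bookkeeping of Gaussian moments, which produces the numerical factor $-\tfrac1{32}$ and the combination $2V_{{\rm eff\,}}^{-2/d}S_G-R_e$, is the heaviest part of the calculation; the remaining steps are a careful but routine application of complex stationary phase.
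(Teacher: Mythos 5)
Your proposal follows essentially the same route as the paper: the identity $S_{k,m}(x,y)=d_k\int_G S_m(g\circ x,y)\ol{\chi_k(g)}\,d\mu(g)$, the known expansion of $S_m$ with $a_0=\tfrac12\pi^{-n-1}$ and $a_1=\tfrac14\pi^{-n-1}R$ from \cite{HHL18}, H\"ormander's complex stationary phase on $G$ at the critical point $g=e$ in coordinates adapted to $J$, $T$ and the orbit (the paper's Theorem~\ref{t-gue161202} and \eqref{e-gue200810yydII}), and the same three-way split of the $m^{n-\frac{d}{2}-1}$ term into the $R$, $\Delta_{d\mu}\ol\chi_k$, and $S_G$/$R_e$ contributions. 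The only differences are presentational (the paper normalizes the Hessian to $2iI_d$ and puts $V_{{\rm eff\,}}$ into the Haar density $V(0)=2^{d/2}V_{{\rm eff\,}}(p)^{-1}$, and its Proposition~\ref{p-1910} makes precise your claim that only fourth-order phase derivatives survive), so the two arguments coincide in substance.
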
 

\section{Preliminaries}\label{s:prelim} 

\subsection{Some standard notations }\label{s-gue170111w}

We shall use the following notations: $\mathbb N=\set{1,2,\ldots}$, 
$\mathbb N_0=\mathbb N\cup\set{0}$, $\mathbb R$ 
is the set of real numbers, $\overline{\mathbb R}_+:=\set{x\in\mathbb R;\, x\geq0}$. 
For a multi-index $\alpha=(\alpha_1,\ldots,\alpha_n)\in\mathbb N_0^n$,
we denote by $\abs{\alpha}=\alpha_1+\ldots+\alpha_n$ its norm.
For $m\in\mathbb N$, write $\alpha\in\set{1,\ldots,m}^n$ 
if $\alpha_j\in\set{1,\ldots,m}$, 
$j=1,\ldots,n$. $\alpha$ is strictly increasing 
if $\alpha_1<\alpha_2<\cdots<\alpha_n$. For $x=(x_1,\ldots,x_n)$, 
we write
\[
\begin{split}
&x^\alpha=x_1^{\alpha_1}\ldots x^{\alpha_n}_n,\\
& \pr_{x_j}=\frac{\pr}{\pr x_j}\,,\quad
\pr^\alpha_x=\pr^{\alpha_1}_{x_1}\ldots\pr^{\alpha_n}_{x_n}
=\frac{\pr^{\abs{\alpha}}}{\pr x^\alpha}\,.
\end{split}
\]
Let $z=(z_1,\ldots,z_n)$, $z_j=x_{2j-1}+ix_{2j}$, $j=1,\ldots,n$, 
be coordinates of $\mathbb C^n$. We write
\[
\begin{split}
&z^\alpha=z_1^{\alpha_1}\ldots z^{\alpha_n}_n\,,
\quad\ol z^\alpha=\ol z_1^{\alpha_1}\ldots\ol z^{\alpha_n}_n\,,\\
&\pr_{z_j}=\frac{\pr}{\pr z_j}=
\frac{1}{2}\Big(\frac{\pr}{\pr x_{2j-1}}-i\frac{\pr}{\pr x_{2j}}\Big)\,,
\quad\pr_{\ol z_j}=\frac{\pr}{\pr\ol z_j}
=\frac{1}{2}\Big(\frac{\pr}{\pr x_{2j-1}}+i\frac{\pr}{\pr x_{2j}}\Big),\\
&\pr^\alpha_z=\pr^{\alpha_1}_{z_1}\ldots\pr^{\alpha_n}_{z_n}
=\frac{\pr^{\abs{\alpha}}}{\pr z^\alpha}\,,\quad
\pr^\alpha_{\ol z}=\pr^{\alpha_1}_{\ol z_1}\ldots\pr^{\alpha_n}_{\ol z_n}
=\frac{\pr^{\abs{\alpha}}}{\pr\ol z^\alpha}\,.
\end{split}
\]
For $j, s\in\mathbb Z$, set $\delta_{j,s}=1$ if $j=s$, 
$\delta_{j,s}=0$ if $j\neq s$.

Let $M$ be a $m$-dimensional smooth orientable paracompact manifold. 
Let $TM$ and $T^*M$ denote the tangent bundle of $M$
and the cotangent bundle of $M$, respectively.
The complexified tangent bundle of $M$ and 
the complexified cotangent bundle of $M$ will be denoted by $\mathbb CTM:=\mathbb C\otimes_{\mathbb R}TM$
and $\mathbb CT^*M:=\mathbb C\otimes_{\mathbb R}T^*M$, respectively. We denote by 
$\langle\,\cdot\,,\cdot\,\rangle$ the pointwise
duality between $TM$ and $T^*M$.
We extend $\langle\,\cdot\,,\cdot\,\rangle$ bilinearly to 
$\mathbb C TM\times\mathbb C T^*M$.

Let $F$ be a smooth vector bundle over $M$. 
Let $D'(M,F)$ and $C^\infty(M,F)$ denote the spaces of distribution sections of $M$ with values in $F$ and smooth sections of $M$ with values in $F$ respectively.
We denote by $E'(M, F)$ the subspace of 
$D'(M, F)$ whose 
elements have compact support in $M$. Put $C^\infty_c(M,F):=C^\infty(M,F)\bigcap E'(M,F)$.

Let $W_1$ be an open set in $\Real^{N_1}$ and let $W_2$ be an open set in $\Real^{N_2}$. Let $E$ and $F$ be vector bundles over $W_1$ and $W_2$, respectively. 
An $m$-dependent continuous operator
$A_m: C^\infty_c(W_2,F)\To D'(W_1,E)$ is called $m$-negligible on $W_1\times W_2$
if, for $m$ large enough, $A_m$ is smoothing and, for any $K\Subset W_1\times W_2$, any
multi-indices $\alpha$, $\beta$ and any $N\in\mathbb N$, there exists $C_{K,\alpha,\beta,N}>0$
such that
\begin{equation*}\label{e-gue13628III}
\abs{\pr^\alpha_x\pr^\beta_yA_m(x, y)}\leq C_{K,\alpha,\beta,N}m^{-N}\:\: \text{on $K$, for all $m\gg 1$}.
\end{equation*}
In that case we write
\[A_m(x,y)=O(m^{-\infty})\:\:\text{on $W_1\times W_2$,}\]
or
\[A_m=O(m^{-\infty})\:\:\text{on $W_1\times W_2$.}\]
If $A_m, B_m: C^\infty_c(W_2, F)\To D'(W_1, E)$ are $m$-dependent continuous operators,
we write $A_m= B_m+O(m^{-\infty})$ on $W_1\times W_2$ or $A_m(x,y)=B_m(x,y)+O(m^{-\infty})$ on $W_1\times W_2$ if $A_m-B_m=O(m^{-\infty})$ on $W_1\times W_2$. 

We recall the definition of the semi-classical symbol spaces

\begin{definition} \label{d-gue140826}
Let $W$ be an open set in $\Real^N$. Let
\begin{gather*}
S(1;W):=\Big\{a\in C^\infty(W)\,|\, \forall\alpha\in\mathbb N^N_0:
\sup_{x\in W}\abs{\pr^\alpha a(x)}<\infty\Big\},\\
S^0_{{\rm loc\,}}(1;W):=\Big\{(a(\cdot,m))_{m\in\Real}\,|\, \forall\alpha\in\mathbb N^N_0,
\forall \chi\in C^\infty_c(W)\,:\:\sup_{m\in\Real, m\geq1}\sup_{x\in W}\abs{\pr^\alpha(\chi a(x,m))}<\infty\Big\}\,.
\end{gather*}
For $k\in\Real$, let
\[
S^k_{{\rm loc}}(1):=S^k_{{\rm loc}}(1;W)=\Big\{(a(\cdot,m))_{m\in\Real}\,|\,(m^{-k}a(\cdot,m))\in S^0_{{\rm loc\,}}(1;W)\Big\}\,.
\]
Hence $a(\cdot,m)\in S^k_{{\rm loc}}(1;W)$ if for every $\alpha\in\mathbb N^N_0$ and $\chi\in C^\infty_0(W)$, there
exists $C_\alpha>0$ independent of $m$, such that $\abs{\pr^\alpha (\chi a(\cdot,m))}\leq C_\alpha m^{k}$ holds on $W$.

Consider a sequence $a_j\in S^{k_j}_{{\rm loc\,}}(1)$, $j\in\N_0$, where $k_j\searrow-\infty$,
and let $a\in S^{k_0}_{{\rm loc\,}}(1)$. We say
\[
a(\cdot,m)\sim
\sum\limits^\infty_{j=0}a_j(\cdot,m)\:\:\text{in $S^{k_0}_{{\rm loc\,}}(1)$},
\]
if, for every
$\ell\in\N_0$, we have $a-\sum^{\ell}_{j=0}a_j\in S^{k_{\ell+1}}_{{\rm loc\,}}(1)$ .
For a given sequence $a_j$ as above, we can always find such an asymptotic sum
$a$, which is unique up to an element in
$S^{-\infty}_{{\rm loc\,}}(1)=S^{-\infty}_{{\rm loc\,}}(1;W):=\cap _kS^k_{{\rm loc\,}}(1)$.

Similarly, we can define $S^k_{{\rm loc\,}}(1;Y)$, $S^k_{{\rm loc\,}}(1;Y,E)$ in the standard way, where $Y$ is a smooth manifold and $E$ is a vector bundle over $Y$. 
\end{definition}

Let $b(m)$ be $m$-dependent function. We write $b(m)=O(m^{-\infty})$ if for every $N>0$, there is a constant $C_N>0$ such that 
$\abs{b(m)}\leq C_Nm^{-N}$, for every $m\gg1$. We write $b(m)\sim\sum^{+\infty}_{j=0}a_jm^{\ell-j}$, where $a_j\in\Complex$, $j=0,1,\ldots$, if for 
every $q\in\mathbb N$, we have $\abs{b(m)-\sum^q_{j=0}a_jm^{\ell-j}}\leq C_qm^{\ell-q-1}$, for all $m\gg1$, where $C_q>0$ is a constant independent of $m$.

\subsection{CR manifolds}\label{s-gue200807yyd}

Let $(X, T^{1,0}X)$ be a compact, connected and orientable CR manifold of dimension $2n+1$, $n\geq 1$, where $T^{1,0}X$ is a CR structure of $X$, that is, $T^{1,0}X$ is a subbundle of rank $n$ of the complexified tangent bundle $\mathbb{C}TX$, satisfying $T^{1,0}X\cap T^{0,1}X=\{0\}$, where $T^{0,1}X=\overline{T^{1,0}X}$, and $[\mathcal V,\mathcal V]\subset\mathcal V$, where $\mathcal V=C^\infty(X, T^{1,0}X)$. There is a unique subbundle $HX$ of $TX$ such that $\mathbb{C}HX=T^{1,0}X \oplus T^{0,1}X$, i.e. $HX$ is the real part of $T^{1,0}X \oplus T^{0,1}X$. Let $J: HX\To HX$ be the complex structure map given by $J(u+\ol u)=iu-i\ol u$, for every $u\in T^{1,0}X$. 
By complex linear extension of $J$ to $\mathbb{C}TX$, the $i$-eigenspace of $J$ is $T^{1,0}X \, = \, \left\{ V \in \mathbb{C}HX \, ;\, JV \, =  \,  \sqrt{-1}V  \right\}.$ We shall also write $(X, HX, J)$ to denote a compact CR manifold.  

From now on, we assume that $X$ admits a transversal and CR $S^1$-action $e^{i\theta}$. Let $T\in C^\infty(X,TX)$ be the global real vector field on $X$ induced by the $S^1$-action. Let $\omega_0\in C^\infty(X,T^*X)$ be the non-vanishing $1$-form on $X$ given by 
\begin{equation}\label{e-gue200803yydI}
\begin{split}
 &\mbox{$\langle\,\omega_0(x)\,,\,u\,\rangle=0$, for every $u\in H_xX$, for every $x\in X$},\\
 &\mbox{$\langle\,\omega_0\,,\,T\,\rangle=-1$ on $X$}.
 \end{split}
\end{equation}
The Levi form at $x\in X$ is the Hermitian  quadratic form on $T^{1,0}_xX$ given by 
\begin{equation}\label{e-gue200807yydh}
\mathcal{L}_x(U,\overline{V}):=-\frac{1}{2i}d\omega_0(U,\overline{V}),\ \ U, V\in T^{1,0}X. 
\end{equation}
In this paper, we assume that $X$ is strongly pseudoconvex, that is, $\mathcal{L}_x$ is positive definite at every point of $X$. 

The Levi form on $X$ induces a Hermitian metric $\langle\,\cdot\,|\,\cdot\,\rangle$ on $\Complex TX$ as \eqref{e-gue200726yyda}. 
 For $u \in \mathbb{C}TX$, we write $|u|^2 := \langle u | u \rangle$. Denote by $T^{*1,0}X$ and $T^{*0,1}X$ the dual bundles $T^{1,0}X$ and $T^{0,1}X$, respectively. They can be identified with subbundles of the complexified cotangent bundle $\mathbb{C}T^*X$. Define the vector bundle of $(0,q)$-forms by $T^{*0,q}X := \wedge^qT^{*0,1}X$. The Hermitian metric $\langle\,\cdot\,|\,\cdot\,\rangle$ on $\mathbb{C}TX$ induces, by duality, a Hermitian metric on $\mathbb{C}T^*X$ and also on the bundle 
 $\oplus^{2n+1}_{r=1}\Lambda^r(\Complex T^*X)$. We shall also denote all these induced metrics by $\langle\,\cdot\,|\,\cdot\,\rangle$. Note that we have the pointwise orthogonal decompositions:
\begin{equation*}
\begin{array}{c}
\mathbb{C}T^*X = T^{*1,0}X \oplus T^{*0,1}X \oplus \left\{ \lambda \omega_0: \lambda \in \mathbb{C} \right\}, \\
\mathbb{C}TX = T^{1,0}X \oplus T^{0,1}X \oplus \left\{ \lambda T: \lambda \in \mathbb{C} \right\}.
\end{array}
\end{equation*}

Let $D$ be an open set of $X$. Let $\Omega^{0,q}(D)$ denote the space of smooth sections of $T^{*0,q}X$ over $D$ and let $\Omega^{0,q}_c(D)$ be the subspace of $\Omega^{0,q}(D)$ whose elements have compact support in $D$. Let $(\,\cdot\,|\,\cdot\,)$ be the $L^2$ inner product on $\Omega^{0,q}(X)$ induced by $\langle\,\cdot\,|\,\cdot\,\rangle$. Let $L^2_{(0,q)}(X)$ be the completion of $\Omega^{0,q}(X)$ with respect to $(\,\cdot\,|\,\cdot\,)$. We extend $(\,\cdot\,|\,\cdot\,)$ to $L^2_{(0,q)}(X)$ in the standard way. We write $L^2(X):=L^2_{(0,0)}(X)$. 

We need the following classical result on local coordinates \cite{BRT85}.
\begin{theorem}\label{t-0529}
	For every point $p\in X$, we can find local coordinates $x=(x_1,\cdots,x_{2n+1})=(z,\theta)=(z_1,\cdots,z_{n},\theta), z_j=x_{2j-1}+ix_{2j},j=1,\cdots,n, x_{2n+1}=\theta$, defined in some small neighborhood $D=\{(z, \theta): \abs{z}<\delta, -\varepsilon_0<\theta<\varepsilon_0\}$ of $x_0$, $\delta>0$, $0<\varepsilon_0<\pi$, such that $(z(p),\theta(p))=(0,0)$ and
	\begin{equation}\label{e-can}
	\begin{split}
	&T=\frac{\partial}{\partial\theta}\\
	&Z_j=\frac{\partial}{\partial z_j}+i\frac{\partial\varphi}{\partial z_j}(z)\frac{\partial}{\partial\theta},j=1,\cdots,n
	\end{split}
	\end{equation}
	where $Z_j(x), j=1,\cdots, n$, form a basis of $T_x^{1,0}X$, for each $x\in D$ and $\varphi(z)\in C^\infty(D,\mathbb R)$ independent of $\theta$. We call $(D,(z,\theta),\varphi)$ BRT trivialization, $x=(z,\theta)$ canonical coordinates and  $\set{Z_j}^n_{j=1}$ BRT frames.
\end{theorem}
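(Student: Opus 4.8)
The plan is to realize these coordinates by first straightening the transverse vector field $T$ and then transporting the CR structure to a local transversal slice. Since the $S^1$-action is transversal to $HX$, the field $T$ is nowhere vanishing, so near $p$ the orbits of the action foliate a neighborhood. I would pick a $2n$-dimensional slice $\Sigma$ through $p$ transverse to $T$ and use the action map $(\sigma,\theta)\mapsto e^{i\theta}\circ\sigma$, which is a local diffeomorphism of $\Sigma\times(-\varepsilon_0,\varepsilon_0)$ onto a neighborhood $D$; this produces the coordinate $\theta$ with $T=\partial/\partial\theta$ and the product shape $D=\{\abs{z}<\delta,\ -\varepsilon_0<\theta<\varepsilon_0\}$ once $\Sigma$ is equipped with the coordinates $z$ constructed below. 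Translating, we may assume $(z(p),\theta(p))=(0,0)$.

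Next I would transport $J$ to $\Sigma$. Let $\pi\colon D\To\Sigma$ be the projection along the $T$-flow. Because $T$ is transverse to $HX$, the differential $d\pi$ restricts to an isomorphism $H_xX\To T_{\pi(x)}\Sigma$, and I define an almost complex structure $\Td J$ on $\Sigma$ by pushing $J$ forward through this isomorphism. Since the action is CR, $J$ is invariant under the flow of $T$, so $\Td J$ is independent of the chosen point in each fibre and hence well defined. The integrability $[C^\infty(X,T^{1,0}X),C^\infty(X,T^{1,0}X)]\subset C^\infty(X,T^{1,0}X)$ of the CR structure translates, after projecting along $\pi$, into the vanishing of the Nijenhuis tensor of $\Td J$; by the Newlander--Nirenberg theorem $(\Sigma,\Td J)$ is a complex manifold, and I take holomorphic coordinates $z=(z_1,\dots,z_n)$ on $\Sigma$ near $p$. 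Pulling them back by $\pi$ gives functions $z_j$ on $D$ that are rigid ($Tz_j=0$, since $d\pi(T)=0$) and CR ($\ol L z_j=0$ for $\ol L\in T^{0,1}X$, since $d\pi$ intertwines $J$ and $\Td J$ and carries $T^{0,1}X$ into the $\Td J$-antiholomorphic directions). Together with $\theta$ they form the desired coordinate system.

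With coordinates in hand, I would produce the frame. In them $T^{0,1}X$ consists of vectors with no $\partial/\partial z_j$ component (this is exactly $\ol L z_j=0$), and transversality forces the $\partial/\partial\ol z_k$ part to be nondegenerate; hence one can take a frame $\ol Z_k=\partial/\partial\ol z_k+\ol a_k\,\partial/\partial\theta$, whose conjugate $Z_k=\partial/\partial z_k+a_k\,\partial/\partial\theta$ frames $T^{1,0}X$, with the $a_k$ rigid because the $z_j$ and the bundle are $T$-invariant. A direct bracket computation gives $[Z_i,Z_j]=(\partial_{z_i}a_j-\partial_{z_j}a_i)\,\partial/\partial\theta$, so involutivity of $T^{1,0}X$ forces $\partial_{z_i}a_j=\partial_{z_j}a_i$. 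By the Poincar\'e lemma in the $z$-variables on the ball $\abs{z}<\delta$ (with $\ol z$ as parameters) there is a $\theta$-independent function $\psi$ with $a_j=\partial_{z_j}\psi$. Finally I would replace $\theta$ by $\Td\theta=\theta-\mathrm{Re}\,\psi$; this preserves $T=\partial/\partial\Td\theta$ and rigidity, and changes $a_j$ into $\partial_{z_j}(\psi-\mathrm{Re}\,\psi)=i\,\partial_{z_j}(\mathrm{Im}\,\psi)$. Setting $\varphi:=\mathrm{Im}\,\psi$, a real $\theta$-independent function, yields $Z_j=\partial/\partial z_j+i(\partial\varphi/\partial z_j)\,\partial/\partial\theta$, as required.

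The main work is the second paragraph: verifying that the transported structure $\Td J$ is genuinely well defined along the orbits (using the CR-invariance hypothesis) and, above all, that its Nijenhuis tensor vanishes so that Newlander--Nirenberg applies. The bookkeeping that keeps every object rigid (that is, $\theta$-independent) through the slice construction and through the final shift of $\theta$ is routine but must be tracked carefully; once rigidity is secured, the integrability identity $\partial_{z_i}a_j=\partial_{z_j}a_i$ and the reality normalization producing $\varphi$ are short.
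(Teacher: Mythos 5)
Your outline is, in substance, the classical Baouendi--Rothschild--Treves argument, which is exactly what the paper relies on: the paper does not prove Theorem~\ref{t-0529} itself but quotes it from \cite{BRT85}, and the route there is the same as yours --- straighten $T$ by a slice and the flow, push $J$ down to the slice (well defined by CR-invariance of the action), check formal integrability by lifting $T$-invariant frames so that brackets are $\pi$-related, invoke Newlander--Nirenberg to get holomorphic coordinates $z$, pull them back as rigid CR functions, normalize the frame to $Z_j=\pr_{z_j}+a_j\pr_\theta$ with rigid $a_j$, extract $\pr_{z_i}a_j=\pr_{z_j}a_i$ from involutivity (the bracket is proportional to $T$, which is transverse to $\mathbb{C}HX$, hence vanishes), and finally shift $\theta$ to make the potential purely imaginary. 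All of these steps are correct as you state them.

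There is, however, one step whose justification as written would fail: the solution of $\pr_{z_j}\psi=a_j$ ``by the Poincar\'e lemma in the $z$-variables with $\ol z$ as parameters.'' The operator $\pr_{z_j}=\frac{1}{2}(\pr_{x_{2j-1}}-i\pr_{x_{2j}})$ is a genuine first-order operator in the real variables; one cannot freeze $\ol z$ and integrate in $z$ unless the $a_j$ are real-analytic (in which case one may complexify and treat $z,\ol z$ as independent). In the smooth category, which is the setting here, the system $\pr_{z_j}\psi=a_j$ is the conjugate of a $\ddbar$-type system: already for $n=1$ it is solvable for \emph{every} smooth $a$ --- with no compatibility condition at all --- via the Cauchy--Pompeiu transform, which shows the mechanism is not that of the Poincar\'e lemma. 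For general $n$ the correct statement is the $\pr$-analogue of the Dolbeault--Grothendieck lemma: setting $\alpha=\sum_j a_j\,dz_j$, your relation $\pr_{z_i}a_j=\pr_{z_j}a_i$ says $\pr\alpha=0$, whence $\ddbar\alpha$ is a $d$-closed $(1,1)$-form; by the local $\pr\ddbar$-lemma on the ball one finds a smooth $u$ with $\pr\ddbar u=\ddbar\alpha$, so $\alpha-\ddbar u$ is $d$-closed and integrates to the desired smooth $\psi$ with $\pr\psi=\alpha$ (alternatively, an induction on the variables using the one-variable Cauchy transform with cutoffs works). Note also that such a $\psi$ is not obtained ``with $\ol z$ as parameters'' but it is still rigid, since the whole construction takes place in the $z$-variables and the $a_j$ are $\theta$-independent. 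With this replacement the remainder of your argument --- the shift $\Td\theta=\theta-\mathrm{Re}\,\psi$, which converts the coefficient into $\pr_{z_j}(i\,\mathrm{Im}\,\psi)$ and yields the real rigid potential $\varphi=\mathrm{Im}\,\psi$ --- goes through verbatim, and the proof is complete.
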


\subsection{Pseudohermitian geometry}
In this subsection we recall the definition of Tanaka-Webster curvature. For this moment, we do not assume that $(X, T^{1,0}X)$ admits a transversal CR $S^1$ action, that is, we only assume that $(X, T^{1,0}X)$ is a general oriented strongly pseudoconvex CR manifold of dimension $2n+1$, $n \ge 1$. Since $X$ is orientable, there is a $\theta_0 \in C^\infty(X, T^*X)$ which annihilates exactly $HX$. Any such $\theta_0$ is called a pseudohermitian structure on $X$. Then there is a unique vector field $\hat T \in C^\infty(X, TX)$ on $X$ such that
\[
\theta_0(\hat T) \equiv 1, \quad d\theta_0(\hat T, \cdot) \equiv 0.
\]
The following is well-known.
\begin{proposition}[\cite{Ta75}, Proposition 3.1]
With the notations above, there is a unique linear connection (Tanaka-Webster connection) denoted by $\nabla^{\theta_0} : C^\infty(X, TX) \to C^\infty(X, T^*X \otimes TX)$ satisfying the following conditions:
\begin{enumerate}
\item The contact structure $HX$ is parallel, i.e. $\nabla^{\theta_0}_UC^\infty(X, HX) \subset C^\infty(X, HX)$ for $U \in C^\infty(X, TX)$.
\item The tensor fields $\hat T$, $J$, $d\theta_0$ are parallel, i.e. $\nabla^{\theta_0}\hat T=0$, $\nabla^{\theta_0}J=0$, $\nabla^{\theta_0}d\theta_0 =0$.
\item The torsion $\tau$ of $\nabla^{\theta_0}$ satisfies: $\tau(U, V) = d\theta_0(U, V)\hat T$, $\tau(\hat T, JU) = - J\tau(\hat T, U)$, $U, V \in C^\infty(X, HX)$.
\end{enumerate}
\end{proposition}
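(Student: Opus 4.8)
The plan is to prove existence and uniqueness separately, organizing everything around the splitting $TX = HX \oplus \mathbb{R}\hat T$ and the associated Webster metric $g_{\theta_0}$, defined by $g_{\theta_0}(U,V) := d\theta_0(U, JV)$ for $U,V \in C^\infty(X,HX)$, together with $g_{\theta_0}(\hat T, \hat T) = 1$ and $\hat T \perp HX$, where $J$ is extended to $TX$ by $J\hat T = 0$ (with the sign convention making the Levi form positive). Strong pseudoconvexity guarantees that $g_{\theta_0}$ is a genuine Riemannian metric. First I would record the equivalent reformulation of conditions (1) and (2): $\nabla^{\theta_0}$ preserves the splitting, annihilates $\hat T$, commutes with $J$, and is metric, i.e. $\nabla^{\theta_0} g_{\theta_0} = 0$; indeed $g_{\theta_0}$ is assembled entirely from the parallel objects $J$, $d\theta_0$, $\hat T$, so metric compatibility is automatic from (2).

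For uniqueness, suppose $\nabla$ and $\nabla'$ both satisfy (1)--(3) and set $D := \nabla - \nabla' \in C^\infty(X, T^*X \otimes \mathrm{End}(TX))$. Then $D$ preserves $HX$ and kills $\hat T$ (from (1),(2)), commutes with $J$ (from $\nabla J = 0$), and is skew with respect to $g_{\theta_0}$. The torsion identities in (3) constrain the difference just enough: on $HX \times HX$ both torsions equal $d\theta_0(\cdot,\cdot)\hat T$, so $D_U V - D_V U = 0$ there; while for $U = \hat T$, combining $\tau(\hat T, JV) = -J\tau(\hat T, V)$ for both connections with $\nabla J = \nabla' J = 0$ forces $J(D_{\hat T}V) = -J(D_{\hat T}V)$, whence $D_{\hat T}V = 0$. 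Thus $D_{\hat T} \equiv 0$ and, on $HX$, $D$ is symmetric in its two arguments and metric-skew; the standard cyclic polarization chain then yields $g_{\theta_0}(D_U V, W) = -g_{\theta_0}(D_U V, W)$, so $D \equiv 0$.

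For existence I would construct $\nabla^{\theta_0}$ locally via Webster's structure equations and then glue. Fix a local frame $\{Z_\alpha\}_{\alpha=1}^n$ of $T^{1,0}X$ with admissible dual coframe $\{\theta^\alpha\}$, so that $d\theta_0 = i\, h_{\alpha\bar\beta}\,\theta^\alpha \wedge \theta^{\bar\beta}$ with $(h_{\alpha\bar\beta})$ positive definite Hermitian. The crux is the algebraic lemma that there exist unique local $1$-forms $\omega_\alpha{}^\beta$ and functions $A^\alpha{}_{\bar\beta}$ with $A_{\alpha\beta} = A_{\beta\alpha}$ solving $d\theta^\alpha = \theta^\beta \wedge \omega_\beta{}^\alpha + \theta_0 \wedge A^\alpha{}_{\bar\beta}\,\theta^{\bar\beta}$ together with $dh_{\alpha\bar\beta} = \omega_{\alpha\bar\beta} + \omega_{\bar\beta\alpha}$; matching coframe components turns this into a linear system whose metric-skew and torsion-symmetric constraints exactly balance the unknowns, as in the Koszul computation. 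One then sets $\nabla^{\theta_0}Z_\alpha := \omega_\alpha{}^\beta \, Z_\beta$, extends by conjugation to $T^{0,1}X$ and by $\nabla^{\theta_0}\hat T := 0$, checks the transformation law of $(\omega, A)$ under a change of unitary frame so that the local connections patch to a global $\nabla^{\theta_0}$, and reads off (1)--(3): $HX$ and $J$ are parallel because the $\pm i$-eigenbundle decomposition is preserved, $\nabla^{\theta_0}\hat T = 0$ by fiat, $\nabla^{\theta_0} d\theta_0 = 0$ from the metric equation, and the torsion identities from the first structure equation.

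The hard part will be the existence half, and within it the unique solvability of the structure equations and the verification of frame-independence. The delicate point throughout is the asymmetry of the torsion prescription in (3): the $HX \times HX$ part is fully specified, whereas the mixed part is only constrained through its anticommutation with $J$. Making this constraint interact correctly with $\nabla^{\theta_0}J = 0$ is exactly what pins down the pseudohermitian torsion $A_{\alpha\beta}$ and its symmetry in the existence argument, mirroring the $J$-trick used above for uniqueness.
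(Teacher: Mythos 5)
You should first be aware that the paper contains no proof of this proposition: it is quoted as classical background, cited to Tanaka (\cite{Ta75}, Proposition 3.1), so there is no internal argument to compare against. Your proposal is a legitimate reconstruction, and it follows Webster's frame-theoretic route (admissible coframe, structure equations $d\theta^\alpha = \theta^\beta\wedge\omega_\beta{}^\alpha + \theta_0\wedge A^\alpha{}_{\bar\beta}\,\theta^{\bar\beta}$, $dh_{\alpha\bar\beta} = \omega_{\alpha\bar\beta} + \omega_{\bar\beta\alpha}$) rather than Tanaka's original, more algebraic construction; either establishes the cited statement. Your uniqueness half is complete and correct as written: $D := \nabla^{\theta_0} - \nabla'^{\theta_0}$ is tensorial, preserves $HX$, kills $\hat T$, commutes with $J$, and is $g_{\theta_0}$-skew; the first torsion identity makes $D$ symmetric on $HX\times HX$, the second identity together with $\nabla^{\theta_0}J=0$ gives $D_{\hat T}(JV) = JD_{\hat T}V = -JD_{\hat T}V$, hence $D_{\hat T}=0$ since $J$ is invertible on $HX$ and $D_{\hat T}V$ is horizontal by (1); the cyclic symmetric-versus-skew chain then kills the rest. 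Two refinements: (i) metric compatibility of $g_{\theta_0}$ is not quite ``automatic from (2)'' alone — you also need $\nabla^{\theta_0}\theta_0 = 0$, which follows by duality from (1) and $\nabla^{\theta_0}\hat T = 0$, so conditions (1) and (2) jointly do the job; (ii) once global uniqueness is proved, your frame-transformation check in the gluing step is redundant, since two local connections satisfying (1)--(3) agree on overlaps by the (purely local) uniqueness argument — this streamlines your existence half. The one piece you assert rather than prove is the algebraic lemma (unique solvability of the structure equations with $A_{\alpha\beta}=A_{\beta\alpha}$); this is exactly Webster's lemma, and your diagnosis that the asymmetric torsion prescription in (3) is what pins down the pseudohermitian torsion $A_{\alpha\beta}$ is accurate, so the plan is sound, though a complete write-up would have to carry out that linear-algebra balancing explicitly. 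What your route buys over the paper's citation is a self-contained construction; what the citation buys is brevity, since nothing in the paper's later computations uses the construction itself, only the connection forms $\omega_\alpha^\beta$ and the resulting curvature \eqref{e-gue200825twsc}.
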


Let $\{ Z_\alpha\}_{\alpha=1}^n$ be a local frame of $T^{1,0}X$ and let $\{ \theta^\alpha\}_{\alpha =1}^n$ be the dual frame of $\{ Z_\alpha\}_{\alpha=1}^n$. Write $Z_{\overline{\alpha}} = \overline{Z_\alpha}$, $\theta^{\overline{\alpha}} = \overline{\theta^\alpha}$. Write
\[
\nabla^{\theta_0}Z_\alpha = \omega_\alpha^\beta \otimes Z_\beta, \quad \nabla^{\theta_0}Z_{\overline{\alpha}} = \omega_{\overline{\alpha}}^{\overline{\beta}} \otimes Z_{\overline{\beta}}, \quad \nabla^{\theta_0}\hat T =0.
\]
we call $\omega_\alpha^\beta$ the connection form of Tanaka-Webster connection with respect to the frame $\{ Z_\alpha\}_{\alpha=1}^n$. We denote by $\Theta_\alpha^\beta$ the Tanaka-Webster curvature form. We have $\Theta_\alpha^\beta = d\omega_\alpha^\beta - \omega_\alpha^\gamma \wedge \omega_\gamma^\beta$. It is easy to check that
\[
\Theta_\alpha^\beta = R_\alpha{}^\beta{}_{j\overline{k}} \theta^j \wedge \theta^{\overline{k}} + A_\alpha{}^\beta{}_{jk} \theta^j \wedge \theta^k + B_\alpha{}^\beta{}_{jk} \theta^{\overline{j}} \wedge \theta^{\overline{k}} + C \wedge \theta_0, \quad \text{$C$ is a one-form}. 
\]
We call $R_\alpha{}^\beta{}_{j\overline{k}}$ the pseudohermitian curvature tensor and its trace
\[
R_{\alpha\overline{k}} := \sum_{j=1}^n R_\alpha{}^j{}_{j\overline{k}} 
\]
is called pseudohermitian Ricci tensor. Write $d\theta_0 = ig_{\alpha\overline{\beta}} \theta^\alpha \wedge \theta^{\overline{\beta}}$. Let $\{ g^{\overline{\sigma}\beta} \}$ be the inverse matrix $\{ g_{\alpha \overline{\beta}} \}$. The Tanaka-Webster scalar curvature $R$ with respect to the pseudohermitian structure $\theta_0$ is given by 
\begin{equation}\label{e-gue200825twsc}
R = g^{\overline{k}\alpha}R_{\alpha \overline{k}}.
\end{equation}

In this paper, we will take $\theta_0=-d\omega_0$ and hence $\hat T=T$. 

\subsection{$G$-equivariant Szeg\H{o} kernels}\label{s-gue200803yyd} 

From now on, we assume that $X$ admits a $d$-dimensional connected compact Lie group $G$ action and Assumption~\ref{a-gue170123I}, Assumption~\ref{a-gue170123II}  hold. We first introduce some notations in representation theory. 

We recall that a representation of the group $G$ is a group homomorphism $\rho: G\To GL(\Complex^d)$ for some $d\in\mathbb N$. The representation represents the elements of the group as $d\times d$ complex square matrices so that multiplication commutes with $\rho$. The number $d$ is the dimension of the representation $\rho$. 
A representation $\rho$ is unitary if each $\rho(g)$, $g\in G$, is an unitary matrix. A representation $\rho$ is reducible if we have a splitting $\Complex^d=V_1\oplus V_2$ so that $\rho(g)V_j =V_j$ for all $g\in G$, for both $j=1,2$ and $0<{\rm dim\,}V_1<d$, where $V_1$ and $V_2$ are vector subspaces of $\Complex^d$. If $\rho$ is not reducible, it is called irreducible. Two representations $\rho_1$ and $\rho_2$ are equivalent if they have the same dimension and there is an invertible matrix $A$ such that $\rho_1(g)=A\rho_2(g)A^{-1}$  for all $g\in G$. To understand all representations of the group $G$, it often suffices to study the irreducible unitary representations. Let 
\begin{equation*}
	R=\{R_1, R_2,...\}
\end{equation*}
be the collection of all irreducible unitary representations of $G$, where each $R_k$
comes from exactly only one equivalent class. For each $R_k$, we  write $R_k$ as a matrix $\left(R_{k,j,\ell}\right)^{d_k}_{j,\ell=1}$, where $d_k$ is the dimension of $R_k$. 
Denote by $\chi_k(g):=\text{Tr}R_k(g)$ the trace of the matrix $R_k(g)$ (the character of $R_k$). 

Fix $m\in\mathbb Z$, let 
\begin{equation}\label{e-gue200807yydi}
S_m: L^2(X)\To{\rm Ker\,}\ddbar_b\bigcap L^2_m(X)
\end{equation}
be the orthogonal projection with respect to $(\,\cdot\,|\,\cdot\,)$. For each $R_k$, the $m$-th Fourier component of the $R_k$-th $G$-equivariant Szeg\H{o} projection is the orthogonal projection 
\begin{equation}\label{e-gue200807yydj}
S_{k,m}: L^2(X)\To{\rm Ker\,}\ddbar_b\bigcap L^2_m(X)_k
\end{equation}
with respect to $(\,\cdot\,|\,\cdot\,)$. Let $S_m(x,y)$, $S_{k,m}(x,y)\in C^\infty(X\times X)$ be the distribution kernels of $S_m$, $S_{k,m}$ respectively. 
Fix a Haar measure $d\mu(g)$ on $G$ so that $\int_Gd\mu(g)=1$. It is not difficult to see that 
\begin{equation}\label{e-gue200803ycdh}
S_{k,m}(x,y)=d_k\int_G S_m(g\circ x, y)\ol{\chi_k(g)}d\mu(g). 
\end{equation}

\section{Local expression for coefficients of lower order terms}\label{s-gue200809yyd}

We use the same notations and assumptions above.
Note that $X$ is strongly pseudoconvex. In this section, we compute the coefficients of the first two lower order terms of the asymptotic expansion \eqref{e-gue200731yyd}. We will first recall the asymptotic expansion result for $S_m$.  We introduce some geometric objects in Theorem~\ref{t-1910} below. 
For $u\in\Lambda^r(\Complex T^*X)$, we denote $\abs{u}^2:=\langle\,u\,|\,u\,\rangle$.  Let $x=(z,\theta)$ be canonical coordinates on an open set $D\subset X$. Until further notice, we will work with the canonical coordinates $x=(z,\theta)$. 
Let $Z_1\in C^\infty(D,T^{1,0}X),\ldots,Z_n\in C^\infty(D,T^{1,0}X)$ be as in Theorem \ref{t-0529} and let $e_1\in C^\infty(D,T^{*1,0}X),\ldots,e_n\in C^\infty(D,T^{*1,0}X)$ be the dual frames. The CR rigid Laplacian with respect to $\langle\,\cdot\,|\,\cdot\,\rangle$ is given by
\begin{equation} \label{s1-e9m}
\triangle_{\mathcal{L}}=(-2)\sum^n_{j,\ell=1}\langle\,e_j\,|\,e_\ell\,\rangle Z_j\ol{Z_\ell}.
\end{equation}
It is easy to check that $\triangle_{\mathcal{L}}$ is globally defined.
Let
\begin{equation*}\label{e-gue160531m}
\frac{1}{n!}\Bigr((-\frac{1}{2\pi}d\omega_0)^n\wedge(-\omega_0)\Bigr)(x)=a(x)dx_1\cdots dx_{2n+1}\ \ \mbox{on $D$},
\end{equation*}
where $a(x)\in C^\infty(D)$.
The rigid scalar curvature $S_{\mathcal{L}}$ is given by
\begin{equation}\label{e-gue160531Im}
S_{\mathcal{L}}(x):=\triangle_{\mathcal{L}}(\log a(x)).
\end{equation}
It is easy to see that $S_{\mathcal{L}}(x)$ is well-defined, $S_{\mathcal{L}}(x)\in C^\infty(X)$, $TS_{\mathcal{L}}(x)=0$, cf. \cite{HHL18}. 
It is shown in \cite[Theorem 3.5]{HHL18} that $S_{\mathcal{L}}(x)=4 R(x)$, where $R$ (cf. \eqref{e-gue200825twsc}) is the Tanaka-Webster scalar curvature with respect to the pseudohermitian structure $-\omega_0$. 


We recall the following \cite{HHL18} 

\begin{theorem}\label{t-1910}
Recall that we work with the assumptions that $X$ is a compact connected orientable strongly pseudoconvex CR manifold of dimension $2n+1$, which admits a transversal CR $S^1$ action. For every $m\in\mathbb Z$, denote by $S_m(x,y)$ the distribution kernel of the orthogonal projection $S_m$ (cf. \eqref{e-gue200807yydi}). 
Fix $p\in X$ and assume that the $S^1$ action is free near $p$. 
Let $x=(z,\theta)$ be canonical coordinates on an open set $D\subset X$ of $p$ with $(z(p),\theta(p))=(0,0)$. Then near $(0,0)$, as $m\To+\infty$, 
\begin{equation}\label{e-gue200809yyd}
\begin{split}
&S_m(x,y)=e^{im\Phi(x,y)}a(x,y,m)+O(m^{-\infty}),\\
&\mbox{$a(x,y,m)\sim\sum^{+\infty}_{j=0}m^{n-j}a_j(x,y)$ in $S^n_{{\rm loc\,}}(1;D\times D)$, $a_j(x,y)\in C^\infty(X\times X)$, $j=0,1,\ldots$},
\end{split}
\end{equation}
where $\Phi(x,y)=x_{2n+1}-y_{2n+1}+\hat\Phi(z,w)$,
\begin{equation}\label{e-gue200809yyda}
\hat\Phi(z,w)=i(\varphi(z)+\varphi(w))-2i\sum_{|\alpha|+|\beta|\leq N}\frac{\pr^{|\alpha|+|\beta|}\varphi}{\pr z^\alpha\pr \bar z^\beta}(0)\frac{z^\alpha}{\alpha!}\frac{\bar w^\beta}{\beta!}+O(|(z,w)|^{N+1}), \ \ \mbox{for every $N\in\mathbb N$},
\end{equation}
where $\varphi(z,w)\in C^\infty(D\times D)$ is as in \eqref{e-can}. 
Moreover, for $a_0(x,y)$, $a_1(x,y)$ in \eqref{e-gue200809yyd}, we have 
\begin{equation}\label{e-1912}
\begin{split}
a_0(x,y)&=a_0(x):=a_0(x,x)=\frac{1}{2\pi^{n+1}}, \\
a_1(x):&=a_1(x,x)=\frac{1}{16\pi^{n+1}}S_{\mathcal{L}}(x) = \frac{1}{4\pi^{n+1}}R(x).
\end{split}
\end{equation}
Note that in \eqref{e-1912}, $a_0(x,y)$ is a constant function. 
\end{theorem}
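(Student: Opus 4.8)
The plan is to build an explicit microlocal parametrix for $S_m$ near $p$ and then to use strong pseudoconvexity to identify it with the true kernel modulo $O(m^{-\infty})$. Fix a BRT trivialization $(D,(z,\theta),\varphi)$ as in Theorem~\ref{t-0529}, so that $T=\pr_\theta$ and $Z_j=\pr_{z_j}+i(\pr_{z_j}\varphi)\pr_\theta$. The guiding structural fact is that a function $u=e^{im\theta}\Td u(z)$ satisfies $\ddbar_bu=0$ iff $\ol Z_ju=0$ for all $j$, i.e. iff $\pr_{\ol z_j}\bigl(e^{m\varphi}\Td u\bigr)=0$; thus $e^{m\varphi}\Td u$ is holomorphic and $S_m$ is locally modelled by the weighted Bergman projection for the weight $e^{-2m\varphi}$. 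Guided by this, I look for an approximate kernel $\Td S_m(x,y)=e^{im\Phi(x,y)}a(x,y,m)$ with $a\sim\sum_j m^{n-j}a_j(x,y)$ in $S^n_{{\rm loc\,}}(1;D\times D)$, subject to three conditions: (i) $\ddbar_b\Td S_m=O(m^{-\infty})$; (ii) $(T-im)\Td S_m=O(m^{-\infty})$; and (iii) $\Td S_m$ is formally self-adjoint and reproducing, $\Td S_m\circ\Td S_m=\Td S_m+O(m^{-\infty})$. Condition (ii) forces the $\theta$-dependence to be $e^{im(x_{2n+1}-y_{2n+1})}$; condition (i) yields an eikonal equation for $\Phi$ together with a hierarchy of transport equations for the $a_j$; condition (iii) removes the remaining freedom and, in particular, normalizes $a_0$.

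Solving the eikonal equation, I expect $\Phi$ to be the polarization of $2i\varphi$. Writing $\varphi(z,\ol w)$ for the sesquianalytic extension of $\varphi$ (Taylor expanded separately in $z$ and $\ol w$), the solution is $\Phi(x,y)=x_{2n+1}-y_{2n+1}+\hat\Phi(z,w)$ with $\hat\Phi(z,w)=i\varphi(z)+i\varphi(w)-2i\varphi(z,\ol w)$, which is exactly \eqref{e-gue200809yyda}. On the diagonal $\hat\Phi(z,z)=0$, so $\Phi(x,x)=0$; and since in BRT coordinates $\omega_0=-d\theta+i(\pr\varphi-\ddbar\varphi)$, one computes $d_x\hat\Phi(x,x)=-i(\pr\varphi-\ddbar\varphi)$, whence $d_x\Phi(x,x)=d\theta-i(\pr\varphi-\ddbar\varphi)=-\omega_0(x)$. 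Strict plurisubharmonicity of $\varphi$, equivalent to strong pseudoconvexity, gives $\mathrm{Im}\,\hat\Phi(z,w)\asymp\abs{z-w}^2$ near the diagonal, so $\mathrm{Im}\,\Phi\geq0$ with equality iff $x=y$. This positivity is what makes $e^{im\Phi}$ concentrate on the diagonal, so that off-diagonal contributions and the tails in (iii) are $O(m^{-\infty})$ by nonstationary phase.

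For the coefficients I would run stationary phase on the composition in (iii). The critical point sits on the diagonal, and the Hessian is governed by $\mathrm{Im}\,\Phi$, hence by the Levi form, i.e. the complex Hessian $(\pr_{z_j}\pr_{\ol z_k}\varphi)$; matching the $m^n$ terms of $\Td S_m\circ\Td S_m$ and of $\Td S_m$ yields a single algebraic equation for $a_0$ on the diagonal, whose solution, after the Gaussian determinant cancels against the density of the volume form, is the universal constant $a_0=\tfrac{1}{2\pi^{n+1}}$ (the extra factor $\pi^{-1}$ relative to the Bergman normalization coming from the $S^1$-fibre). This matches the leading Boutet de Monvel--Sj\"ostrand coefficient, cf. \cite{BouSj76, Hsiao08}. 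Pushing the same computation to order $m^{n-1}$, the subleading reproducing/transport identity expresses $a_1$ through second derivatives of $\Phi$ and $a_0$, which reorganize into $\triangle_{\mathcal L}$ applied to $\log$ of the density, i.e. into the rigid scalar curvature $S_{\mathcal L}$ of \eqref{e-gue160531Im}; the outcome is $a_1=\tfrac{1}{16\pi^{n+1}}S_{\mathcal L}$. Equivalently, $a_0$ and $a_1$ are the first two Tian--Yau--Zelditch coefficients of the weighted Bergman kernel for $e^{-2m\varphi}$ (cf. \cite{Lu, MM08a}), transported back to $X$ through the weight factors $e^{-m(\varphi(z)+\varphi(w))}$.

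Finally, the identity $S_{\mathcal L}=4R$ recorded just before the theorem (\cite[Theorem 3.5]{HHL18}, with $\theta_0=-d\omega_0$ as in \eqref{e-gue200825twsc}) gives $a_1=\tfrac{1}{4\pi^{n+1}}R$. To conclude, one upgrades the parametrix to the true kernel: as the excerpt already records, strong pseudoconvexity makes $\Ker\ddbar_b\cap L^2_m(X)$ finite dimensional and $S_m$ smoothing, and a standard uniqueness argument then shows that any formally self-adjoint approximate reproducing kernel satisfying (i)--(iii) differs from the genuine orthogonal projection by $O(m^{-\infty})$, so $S_m=\Td S_m+O(m^{-\infty})$. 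The principal obstacle is the order $m^{n-1}$ bookkeeping in the third step: one must carry the subprincipal symbols simultaneously through the eikonal/transport hierarchy and through the stationary-phase composition, and verify that the many curvature-type contributions collapse exactly into $\triangle_{\mathcal L}\log a$. This is the delicate computation, and the clean collapse onto $S_{\mathcal L}$ (hence onto $R$) is precisely the consistency check on which the parametrix and the weighted-Bergman routes must agree.
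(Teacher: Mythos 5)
The paper offers no internal proof of Theorem~\ref{t-1910} to compare against: it is recalled verbatim from \cite{HHL18} (``We recall the following \cite{HHL18}''), with the second-coefficient identity $S_{\mathcal L}=4R$ quoted from \cite[Theorem 3.5]{HHL18}. Your sketch follows essentially the route of that cited source and of the literature it rests on \cite{BouSj76,Hsiao08,Hsiao12,Hsiao16}: BRT trivialization turning the $m$-th Fourier mode of $\Ker\ddbar_b$ into holomorphic functions for the weight $e^{-2m\varphi}$ (your computation $\ol Z_j(e^{im\theta}\Td u)=e^{im\theta}e^{-m\varphi}\pr_{\ol z_j}(e^{m\varphi}\Td u)$ is correct), the phase taken as the polarization $i\varphi(z)+i\varphi(w)-2i\varphi(z,\ol w)$, which reproduces \eqref{e-gue200809yyda}, and your verifications that $\Phi(x,x)=0$, $d_x\Phi(x,x)=-\omega_0(x)$ and ${\rm Im\,}\hat\Phi\asymp\abs{z-w}^2$ are all sound; the coefficients are then matched against weighted-Bergman/Tian--Yau--Zelditch data, exactly as in \cite{HHL18}.

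Two steps, however, are asserted where the cited proof does real work, and you should not leave them as ``standard.'' First, the uniqueness argument upgrading the parametrix to $S_m$ needs more than your conditions (i)--(iii) plus finite-dimensionality of $\Ker\ddbar_b\cap L^2_m(X)$: self-adjoint approximate reproducing kernels agree with the true projector modulo $O(m^{-\infty})$ only once one has a spectral gap for the Kohn Laplacian on the $m$-th Fourier mode, ${\rm Spec\,}\Box^{(0)}_{b,m}\subset\{0\}\cup[cm,\infty)$ (or at least a lower bound on the nonzero spectrum decaying slower than any $m^{-N}$), which in this strongly pseudoconvex, transversal-$S^1$ setting follows from Kohn-type estimates but must be invoked explicitly; without it, (iii) does not pin down $S_m$. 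Second, the formula $a_1=\frac{1}{16\pi^{n+1}}S_{\mathcal L}$ \emph{is} the theorem, and your reduction to known second coefficients (cf.\ \cite{Lu,MM08a,Hsiao12,Hsiao16}) is legitimate only after a careful normalization check: the metric here is the Levi metric \eqref{e-gue200726yyda} with its factor $\frac{-1}{2i}$, and the paper itself warns it differs from \cite[(3.8)]{HHL18}; it is precisely this choice of metric and induced volume that makes $a_0$ the universal constant $\frac{1}{2\pi^{n+1}}$ and fixes the $\frac{1}{16}$ in front of $S_{\mathcal L}$ in \eqref{e-gue160531Im}--\eqref{e-1912}. As written, the ``clean collapse onto $\triangle_{\mathcal L}\log a$'' is a plausible claim, not a computation; with the spectral gap supplied and the $a_1$ bookkeeping (or the normalization-matched citation) carried out, your argument is correct and is the same approach as the proof in \cite{HHL18}.
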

Note that the Levi metric in \eqref{e-gue200726yyda} is slightly different from that in \cite[(3.8)]{HHL18}.

Until further notice, we fix an irreducible representation $R_k$ and we fix $p\in\mu^{-1}(0)$. As $m\To+\infty$, we have 
\begin{equation}\label{e-1913}
\begin{split}
S_{k,m}(p):&=S_{k,m}(p,p)=d_k\int_G S_m(g\circ p,p)\ol{\chi_k(g)}d\mu(g)\\
&= d_k\int_G e^{im\Phi(g\circ p,p)}a(g\circ p,p,m)\ol{\chi_k(g)}d\mu(g)+O(m^{-\infty}).
\end{split}
\end{equation}
Put $Y_p=\set{g\circ p;\, g\in G}$, then $Y_p$ is a $d$-dimensional submanifold of $X$. The $G$-invariant Hermitian metric $\langle\,\cdot\,|\,\cdot\,\rangle$ induces a volume form $dv_{Y_p}$ on $Y_p$. Set
\begin{equation}\label{e-gue170108e}
V_{{\rm eff\,}}(p):=\int_{Y_p}dv_{Y_p}.
\end{equation}
For $f(g)\in C^\infty(G)$, let $\hat f(g\circ p):=f(g)$, $\forall g\in G$. Then, $\hat f\in C^\infty(Y_p)$. Let $d\hat\mu$ be the measure on $G$ given by $\int_Gfd\hat\mu:=\int_{Y_p}\hat fdv_{Y_p}$, for all $f\in C^\infty(G)$. It is not difficult to see that $d\hat\mu$ is a Haar measure and 
\begin{equation}\label{e-gue170108}
\int_Gd\hat\mu=V_{{\rm eff\,}}(p). 
\end{equation}

Let $e_0$ denote the identity element of $G$. Let $d\mu$ be the Haar measure  on $G$ such that $\int_Gd\mu=1$ and let $\langle\,\cdot\,|\,\cdot\,\rangle_{d\mu}$ be the Hermitian metric on $TG$ such that 
$\langle\,\cdot\,|\,\cdot\,\rangle_{d\mu}$ induces the Haar measure $d\mu$ on $G$. From now on, we fix $p\in\mu^{-1}(0)$. We need ( see \cite[Theorem 3.6]{HH})

\begin{theorem}\label{t-gue161202}
There exist local coordinates $y'=(y_1,\ldots,y_d)$ of $G$ defined in  a neighborhood $W$ of $e_0$ with $y'(e_0)=(0,\ldots,0)$
such that
\begin{equation}\label{e-gue200810yyd}
\begin{split}
&\mbox{$\langle\,\frac{\pr}{\pr y_j}\,|\,\frac{\pr}{\pr y_\ell}\,\rangle_{d\mu}=2\Bigr(V_{{\rm eff\,}}(p)\Bigr)^{-\frac{2}{d}}\delta_{j,\ell}+O(\abs{y}^2)$, for every $j, \ell=1,\ldots,d$},\\
&\mbox{$(y_1,\ldots,y_d)\circ (0,\ldots,0)=(y_1,\ldots,y_d,0,\ldots,0)$, for every $(y_1,\ldots,y_d)\in W$}, 
\end{split}
\end{equation}
and we can find local coordinates $y=(y_1,\ldots,y_d, y_{d+1},\ldots,y_{2n+1})$ of $X$ defined in a neighborhood $U=U_1\times U_2$ of $p$ with $0\leftrightarrow p$, where $U_1\subset W$ is an open neighborhood of $0\in\Real^d$,  $U_2\subset\Real^{2n+1-d}$ is an open neighborhood of $0\in\Real^{2n+1-d} $, such that
	\begin{equation}\label{e-gue200810yydI}
	\begin{split}
	&T=\frac{\pr}{\pr x_{2n+1}},\\
	&\underline{\mathfrak{g}}={\rm span\,}\set{\frac{\pr}{\pr y_1},\ldots,\frac{\pr}{\pr y_d}},\\
	&T^{1,0}_pX={\rm span\,}\set{Z_1,\ldots,Z_n},\\
	&Z_j=\frac{1}{2}(\frac{\pr}{\pr y_j}-i\frac{\pr}{\pr y_{d+j}})(p),\ \ j=1,\ldots,d,\\
	&Z_j=\frac{1}{2}(\frac{\pr}{\pr y_{2j-1}}-i\frac{\pr}{\pr y_{2j}})(p),\ \ j=d+1,\ldots,n,\\
	&\mathcal{L}_p(Z_j, \ol Z_\ell)=\delta_{j,\ell},\ \ j,\ell=1,2,\ldots,n.
	\end{split}
	\end{equation}		
\end{theorem}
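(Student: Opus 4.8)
The plan is to treat \eqref{e-gue200810yydI} as an adapted (``normal'') coordinate construction for the free $G\times S^1$ action near $p$, and to read off \eqref{e-gue200810yyd} from the $G$-factor. First I would coordinatize $G$. Since $G\times S^1$ acts freely near $\mu^{-1}(0)$, the orbit map $\iota_p\colon G\to Y_p$, $g\mapsto g\circ p$, is a diffeomorphism onto $Y_p$. I would choose an orthogonal basis $\xi_1,\dots,\xi_d$ of $\mathfrak g$ for the left-invariant metric $\langle\,\cdot\,|\,\cdot\,\rangle_{d\mu}$, scaled to norm-squared $2(V_{{\rm eff\,}}(p))^{-\frac2d}$, and take geodesic normal coordinates $y'=(y_1,\dots,y_d)$ at $e_0$. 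In normal coordinates the first-order terms of the metric vanish, giving $\langle\,\pr_{y_j}\,|\,\pr_{y_\ell}\,\rangle_{d\mu}=2(V_{{\rm eff\,}}(p))^{-\frac2d}\delta_{j\ell}+O(\abs{y}^2)$, which is exactly \eqref{e-gue200810yyd}. The constant is pinned down by a volume comparison: the left-invariant metric $h_p:=\iota_p^*\langle\,\cdot\,|\,\cdot\,\rangle$ is isometric under $\iota_p$ to $(Y_p,\text{Levi})$, so its Riemannian volume form is $V_{{\rm eff\,}}(p)\,d\mu$ by \eqref{e-gue170108e} and \eqref{e-gue170108}, whereas $\langle\,\cdot\,|\,\cdot\,\rangle_{d\mu}$ has total mass $1$; matching the two forces the factor $(V_{{\rm eff\,}}(p))^{-\frac2d}$.

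Next I would transport these to $X$. Using freeness and the transversality \eqref{e-gue170111ryI}, a slice $U_2$ transverse to the $G\times S^1$ orbits through $p$ yields a diffeomorphism of a product neighborhood onto $U$, and I would define $y_1,\dots,y_d$ on $U$ by imposing $(y_1,\dots,y_d)\circ(0,\dots,0)=(y_1,\dots,y_d,0,\dots,0)$. This simultaneously realizes the link condition between the $G$- and $X$-coordinates and makes $\underline{\mathfrak g}={\rm span\,}\set{\pr_{y_1},\dots,\pr_{y_d}}$ throughout $U$. To arrange $T=\pr_{x_{2n+1}}$ I would invoke the BRT trivialization of Theorem~\ref{t-0529} for the $S^1$-action, using the commutativity \eqref{e-gue170111ryII} of the $G$- and $S^1$-actions so that the two sets of flow coordinates remain compatible, and I would fill in $y_{2d+1},\dots,y_{2n}$ from coordinates on the slice.

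It then remains to normalize the Levi form at $p$. The key structural input is that $p\in\mu^{-1}(0)$: by Definition~\ref{d-gue170124} the orbit directions annihilate $\omega_0$ at $p$, hence lie in $H_pX$, and regularity of $\mu$ forces $\underline{\mathfrak g}_p$ to be isotropic for $d\omega_0$, so that $\underline{\mathfrak g}_p\cap J\underline{\mathfrak g}_p=\{0\}$ since $\mathcal L_p$ is positive definite. I would therefore pick $\pr_{y_1},\dots,\pr_{y_d}\in\underline{\mathfrak g}_p$ Levi-orthogonal of norm-squared $2$, set the transverse directions $\pr_{y_{d+j}}:=J\pr_{y_j}$, and complete by Gram--Schmidt to a Levi-orthonormal horizontal frame. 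Isotropy kills the imaginary cross terms, so the vectors $Z_j=\frac12(\pr_{y_j}-i\pr_{y_{d+j}})$ for $j\le d$, together with the remaining $Z_j=\frac12(\pr_{y_{2j-1}}-i\pr_{y_{2j}})$, satisfy $\mathcal L_p(Z_j,\ol Z_\ell)=\delta_{j\ell}$.

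The main obstacle is \emph{consistency}: all of the above must be met by a single coordinate system, so the $G$-normalization \eqref{e-gue200810yyd} and the Levi-normalization $\mathcal L_p(Z_j,\ol Z_\ell)=\delta_{j\ell}$ must agree along the orbit directions. Concretely, the basis $\xi_1,\dots,\xi_d$ has to be simultaneously $\langle\,\cdot\,|\,\cdot\,\rangle_{d\mu}$-orthogonal with equal norms and $h_p$-orthonormal up to the universal factor $2$, which forces $\langle\,\cdot\,|\,\cdot\,\rangle_{d\mu}$ and $h_p$ to be proportional at $e_0$ with ratio $(V_{{\rm eff\,}}(p))^{-\frac2d}$. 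Establishing this proportionality -- reconciling the Haar metric with the orbit-pullback of the Levi metric, both left-invariant, through the effective-volume computation above -- is the delicate point, and it also requires checking that the BRT frame, the transverse slice, and the Levi diagonalization can be chosen without spoiling one another.
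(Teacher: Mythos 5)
The first thing to note is that the paper itself contains no proof of Theorem~\ref{t-gue161202}: it is imported verbatim from \cite[Theorem 3.6]{HH} (``We need (see \cite[Theorem 3.6]{HH})''), so the only internal evidence for the intended construction is how the theorem is used afterwards, in particular \eqref{e-gue200811yyd} and the appearance of the $p$-dependent quantities $\Delta_{d\mu}$ and $S_G(p)$ in Theorem~\ref{t-gue170128I}. Measured against that, your outline does identify the genuinely geometric ingredients correctly: at $p\in\mu^{-1}(0)$ the orbit directions annihilate $\omega_0$, hence lie in $H_pX$, and are isotropic for $d\omega_0$, which gives $\underline{\mathfrak g}_p\cap J\underline{\mathfrak g}_p=\{0\}$, makes the Levi Gram matrix of the $(1,0)$-parts real symmetric, and allows a real change of basis in $\underline{\mathfrak g}_p$ producing $\mathcal L_p(Z_j,\ol Z_\ell)=\delta_{j,\ell}$; slice coordinates for the free $G\times S^1$ action, with \eqref{e-gue170111ryII} guaranteeing $g_*T=T$, give $T=\pr_{y_{2n+1}}$ and $\underline{\mathfrak g}={\rm span\,}\set{\pr_{y_1},\ldots,\pr_{y_d}}$; and normal coordinates on $G$ give the $O(\abs{y}^2)$ statement. (One small misattribution: the isotropy comes from $G$-invariance of $\omega_0$ --- which yields $\iota_{\xi_X}d\omega_0=-d\langle\mu,\xi\rangle$ and the tangency of $\xi_X$ to $\mu^{-1}(0)$ --- not from $0$ being a regular value; regularity only enters to make $\mu^{-1}(0)$ a manifold.)

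The genuine gap is exactly at the point you flag as ``the delicate point'' and then do not close, and the route you sketch toward it cannot work. You correctly observe that the joint statement \eqref{e-gue200810yyd}--\eqref{e-gue200810yydI} forces $\langle\,\cdot\,|\,\cdot\,\rangle_{d\mu}$ and $h_p=\iota_p^*\langle\,\cdot\,|\,\cdot\,\rangle$ to be proportional at $e_0$ with ratio $(V_{{\rm eff\,}}(p))^{-\frac{2}{d}}$, but your argument for this --- $h_p$ has Riemannian volume $V_{{\rm eff\,}}(p)\,d\mu$ while $\langle\,\cdot\,|\,\cdot\,\rangle_{d\mu}$ has total mass $1$, ``matching the two forces the factor'' --- only determines the constant \emph{assuming} proportionality; equality of volume forms does not imply proportionality of metrics. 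On a torus, $\tfrac14dx^2+4dy^2$ and $dx^2+dy^2$ are both invariant, both induce $dx\wedge dy$, and are not proportional; for such a choice of ``metric inducing the Haar measure'' the theorem would simply be false, since no basis of $\mathfrak g$ is simultaneously orthogonal with equal norms for both inner products. The missing idea is that the proportionality is definitional, not derivable: since the $G$-action preserves the Levi metric (Assumption~\ref{a-gue170123I}), $h_p$ is a left-invariant metric on $G$, so its Riemannian volume is a left-invariant measure and hence equals $V_{{\rm eff\,}}(p)\,d\mu$ by \eqref{e-gue170108}; one must then \emph{define} $\langle\,\cdot\,|\,\cdot\,\rangle_{d\mu}:=(V_{{\rm eff\,}}(p))^{-\frac{2}{d}}h_p$, the unique metric proportional to $h_p$ whose volume form is the probability Haar measure. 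This is the reading under which the paper can call \eqref{e-gue200811yyd} ``straightforward'' from \eqref{e-gue170108}, and under which $\Delta_{d\mu}$ and $S_G(p)$ later make sense. With that definition in hand your construction does close: pick $u_j\in\underline{\mathfrak g}_p$ Levi-orthogonal with $\abs{u_j}^2=2$, set $\xi_j=(d\iota_p)^{-1}u_j$, which are then automatically $\langle\,\cdot\,|\,\cdot\,\rangle_{d\mu}$-orthogonal with norm squared $2(V_{{\rm eff\,}}(p))^{-\frac{2}{d}}$, and run your normal-coordinate and slice arguments.
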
  

We will also identify $\frac{\pr}{\pr y_j}$, $j=1,\ldots,d$, as vector fields on $X$. 
From \eqref{e-gue170108}, it is straightforward to see that 
\begin{equation}\label{e-gue200811yyd}
\mbox{$\langle\,\frac{\pr}{\pr y_j}\,|\,\frac{\pr}{\pr y_\ell}\,\rangle=\Bigr(V_{{\rm eff\,}}(p)\Bigr)^{\frac{2}{d}}\langle\,\frac{\pr}{\pr y_j}\,|\,\frac{\pr}{\pr y_\ell}\,\rangle_{d\mu}$, $j, \ell=1,\ldots,d$}. 
\end{equation}

 Let $x=(z,\theta)$ be canonical coordinates on an open neighborhood $D\subset X$ of $p$ with $(z(p),\theta(p))=(0,0)$. 
Let $U_1$ and $U_2$ be open sets in Theorem~\ref{t-gue161202}. We take $U_1$ and $U_2$ small enough so that 
$U_1\times U_2\subset D$ in a BRT chart $D$ of $p$. As before, let $y'=(y_1,\ldots,y_d)$. From now on, we take $x=(z,\theta)$ so that 
\begin{equation}\label{e-gue200810yydII}
\begin{split}
&\frac{\pr}{\pr y_j}=\frac{\pr}{\pr x_{2j-1}}+O(\abs{y'}) \  \mbox{at $(y_1,\ldots,y_d,0,\ldots,0)$, $j=1,\ldots,d$},\\
&\varphi(z)=\sum^n_{j=1}\abs{z_j}^2+O(\abs{z}^4), \\
&\frac{\pr^4\varphi}{\pr z_\alpha\pr z_\beta\pr z_\gamma\pr z_\delta}(0)=0,\ \ \mbox{for every $\alpha, \beta, \gamma, \delta=1,\ldots,n$},\\
&\frac{\pr^4\varphi}{\pr z_\alpha\pr z_\beta\pr z_\gamma\pr\ol z_\delta}(0)=0,\ \ \mbox{for every $\alpha, \beta, \gamma, \delta=1,\ldots,n$},
\end{split}
\end{equation}
where $\varphi(z)\in C^\infty(D,\Real)$ is as in \eqref{e-can}. 

From now on, we will work with $y$-coordinates and we will identify $y'=(y_1,\ldots,y_d)$ as local coordinates of $G$ defined near $e_0$. 
In $y'$-coordinates, we write $d\mu(g)=V(y')dy'$, $V(y')\in C^\infty(U_1)$. From the first property of \eqref{e-gue200810yyd}, we see that 
\begin{equation}\label{e-gue200811yyda}
\begin{split}
&V(0)=2^{\frac{d}{2}}\Bigr(V_{{\rm eff\,}}(p)\Bigr)^{-1},\\
&(\frac{\pr}{\pr y_j}V)(0)=0,\ \  j=1,...,d.
\end{split}
\end{equation}

Let $\tau(y')\in C^\infty_0(U_1)$, $\tau=1$ near $y'=0$. We have
\begin{equation}\label{e-1915}
S_{k,m}(p)=d_k\int e^{im\Phi(y',0)}a(y',0,m)\ol{\chi_k(y')}V(y')\tau(y')dy'+O(m^{-\infty}).
\end{equation}

Let us recall H\"ormander's stationary phase formula \cite{Hor03}
\begin{theorem}\label{t-1911}
Let $D$ be an open subset in $\mathbb{R}^N$ and $F$ be a smooth function on $D$.
If $\operatorname{Im} F\geq 0$, $\operatorname{Im} F(0)=0, F'(0)=0$, $\det F''(0)\neq 0$, then
\begin{equation*}\label{e-1916}
\int_{\mathbb{R}^N}e^{imF(x)}u(x)dx\sim e^{imF(0)}\left(\det \frac{mF''(0)}{2\pi i} \right)^{-\frac{1}{2}}\sum_{j=0}^\infty m^{-j}L_j u.
\end{equation*}
Here 
\begin{equation}\label{e-1917}
L_j u=\sum_{\nu-\mu=j}\sum_{2\nu\geq 3\mu}i^{-j}2^{-\nu}\langle F''(0)^{-1}D,D\rangle^{\nu}\left(\frac{h^{\mu}u}{\nu!\mu!} \right)(0),
\end{equation}
where $D=(-i\pr_{x_1},...,-i\pr_{x_N})^T$, $h(x)=F(x)-F(0)-\frac{1}{2}\langle F''(0)x,x\rangle$.
\end{theorem}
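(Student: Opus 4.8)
The plan is to prove this as the classical method of stationary phase for a nondegenerate critical point with a complex phase (H\"ormander \cite{Hor03}, Theorem 7.7.5); I outline the argument one would carry out. First I would reduce to a normalized local situation. Factoring out $e^{imF(0)}$ we may assume $F(0)=0$, and since $\det F''(0)\neq0$ the origin is an isolated critical point while $\operatorname{Im}F\geq0$ gives $|e^{imF}|\leq1$. Choosing a cutoff $\chi\in C_c^\infty(D)$ equal to $1$ near $0$, the contribution of $(1-\chi)u$ is $O(m^{-\infty})$: on $\operatorname{supp}(1-\chi)u$ one has $F'\neq0$ and $\operatorname{Im}F\geq0$, so by the non-stationary phase principle repeated integration by parts gains a factor $m^{-1}$ at each step. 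Hence it suffices to treat $\int e^{imF}\chi u\,dx$ with $\chi u$ supported near $0$.

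The core analytic input is the complex Gaussian moment formula: for a symmetric nondegenerate matrix $A$ with $\operatorname{Im}A\geq0$ and $v\in C^\infty_c(\Real^N)$,
\begin{equation*}
\int_{\Real^N}e^{\frac{i}{2}\langle Ay,y\rangle}v(y)\,dy=\bigl(\det\tfrac{A}{2\pi i}\bigr)^{-\frac12}\sum_{\nu=0}^{K-1}\frac{1}{\nu!}\bigl(\tfrac{1}{2i}\langle A^{-1}D,D\rangle\bigr)^{\nu}v(0)+R_K(v),
\end{equation*}
with $R_K(v)=O\bigl(\sum_{|\alpha|\leq 2K}\sup|D^\alpha v|\bigr)$; this follows from Plancherel and the fact that $e^{\frac{i}{2}\langle Ay,y\rangle}$ has a Gaussian Fourier transform, the branch of $\bigl(\det(A/2\pi i)\bigr)^{-1/2}$ being fixed by analytic continuation from $\operatorname{Im}A>0$. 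I would then rescale $x=y/\sqrt m$, which replaces the quadratic part by the $m$-independent Gaussian $e^{\frac{i}{2}\langle F''(0)y,y\rangle}$ and, with the Jacobian $m^{-N/2}$, yields the prefactor $\bigl(\det\tfrac{mF''(0)}{2\pi i}\bigr)^{-1/2}$. Writing $h(x)=F(x)-\tfrac12\langle F''(0)x,x\rangle=O(|x|^3)$, one finds $m\,h(y/\sqrt m)=O(m^{-1/2})$, so $e^{imh(y/\sqrt m)}$ and $(\chi u)(y/\sqrt m)$ admit Taylor expansions in powers of $m^{-1/2}$ with controlled remainders.

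The bookkeeping step combines the two expansions and integrates termwise by the moment formula. Expanding $e^{imh}=\sum_\mu(imh)^\mu/\mu!$ and using $h=O(|x|^3)$, the factor $h^\mu$ vanishes to order $3\mu$ at $0$; applying the moment formula to $v=(imh)^\mu\chi u/\mu!$ produces $\langle F''(0)^{-1}D,D\rangle^\nu$, whose value at $0$ on a function vanishing to order $3\mu$ is zero unless $2\nu\geq3\mu$. Counting powers of $m$, the $(im)^\mu$ from $h^\mu$ and the $m^{-\nu}$ from the $\nu$-th Gaussian term combine to $m^{\mu-\nu}=m^{-j}$ with $j=\nu-\mu$, giving exactly the constraints $\nu-\mu=j$ and $2\nu\geq3\mu$ in \eqref{e-1917}; multiplying the constants $i^\mu/\mu!$ and $(2i)^{-\nu}/\nu!$ reproduces $i^{-j}2^{-\nu}/(\nu!\mu!)$. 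The half-integer powers of $m$ cancel automatically because the constraint forces $\nu-\mu\in\N_0$.

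The hard part will be the rigorous remainder control. One must show that truncating both the moment formula and the Taylor expansion of $e^{imh(y/\sqrt m)}(\chi u)(y/\sqrt m)$ leaves a genuinely $O(m^{-K})$ error, uniformly bounded by finitely many sup-norms of derivatives of $u$, and that the termwise integration is legitimate. The most delicate point is the complex, possibly degenerate, Gaussian: since $\operatorname{Im}F''(0)$ is only assumed $\geq0$, the moment formula and its remainder estimate must be obtained by analytic continuation from the strictly positive case and a limiting argument keeping the constant $\bigl(\det(F''(0)/2\pi i)\bigr)^{-1/2}$ and all bounds uniform up to the boundary $\operatorname{Im}F''(0)=0$.
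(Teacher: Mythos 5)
The paper does not prove this theorem at all: it is quoted verbatim from H\"ormander's Theorem 7.7.5 in \cite{Hor03}, and your outline is precisely the canonical proof from that source (non-stationary phase away from the critical point, the complex Gaussian moment formula with the branch fixed by continuation from positive-definite imaginary part, expansion of $e^{imh}$ with the counting $\nu-\mu=j$, $2\nu\geq 3\mu$ coming from $h=O(|x|^3)$, and the sup-norm remainder estimates). So your proposal is correct and takes essentially the same approach as the paper's implicit one, the only caveat being that the quoted statement (and hence your reduction step) tacitly assumes $u$ has compact support and $F'\neq 0$ on ${\rm supp}\,u$ away from the critical point, as in H\"ormander's original hypotheses and as holds in the paper's application.
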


In our case, $F(y')=\Phi(y',0)$, $u(y')=a(y',0,m)\ol{\chi_k(y')}V(y')\tau(y')$.
Since $p\in\mu^{-1}(0)$, we have 
\begin{equation*}
\frac{\pr}{\pr y_j}	\Phi(p,p)=\langle\,\omega_0(p)\,,\, \frac{\pr}{\pr y_j}\,\rangle=0,
\end{equation*}
for $j=1,...,d$. 
Applying Theorem \ref{t-1911}, we have
\begin{equation}\label{e-1918}
\begin{split}
S_{k,m}(p)&\sim c_0d_k m^{-\frac{d}{2}}\sum_{j=0}^\infty m^{-j}L_j u\\
&\sim m^{n-\frac{d}{2}}b_{0,k}(p)+m^{n-\frac{d}{2}-1}b_{1,k}(p)+\cdots, 
\end{split}
\end{equation}
for some constant $c_0$. 
Our goal is to compute the coefficients $b_{0,k}(p), b_{1,k}(p)$. From \eqref{e-gue200809yyda} and \eqref{e-gue200810yydII}, 
we see that 
\begin{equation}\label{e-1919}
F(y')=\Phi(y',0)=i\sum_{j=1}^d y_j^2+O(|y'|^3),
\end{equation}
which satisfies the conditions in Theorem \ref{t-1911}.
Moreover, 
\begin{equation}\label{e-gue200811yydh}
	\begin{split}
	&\Big(\det \frac{mF''(0)}{2\pi i} \Big)^{-\frac{1}{2}}=\pi^{\frac{d}{2}}m^{-\frac{d}{2}},\\\
	&\langle F''(0)^{-1}D,D\rangle=\frac{i}{2}\sum_{j=1}^d\frac{\pr^2}{\pr y_j^2}=:\frac{i}{2}\Delta.
	\end{split}
\end{equation}
So $c_0=\pi^{\frac{d}{2}}$ and 
\begin{equation}\label{e-gue200812yyd}
S_{k,m}(p)\sim \pi^{\frac{d}{2}}d_k m^{-\frac{d}{2}}\sum_{j=0}^\infty m^{-j}L_j u.
\end{equation} 

At $(y_1,\ldots,y_d,0,\ldots,0)$, write 
\begin{equation}\label{e-gue200817yyd}
\begin{split}
&\frac{\pr}{\pr y_j}=\sum^{n}_{\ell=1}\Bigr(\hat a_{j,\ell}Z_{\ell}+\ol{\hat a_{j,\ell}}\ol {Z_{\ell}}\Bigr),\ \ j=1,\ldots,d,\\
&J(\frac{\pr}{\pr y_j})=\sum^{n}_{\ell=1}\Bigr(\hat a_{j,\ell}iZ_\ell+\ol{\hat a_{j,\ell}}(-i)\ol {Z_{\ell}}\Bigr),\ \ j=1,\ldots,d,
\end{split}\end{equation}
where $Z_\ell\in T^{1,0}X$ is as in \eqref{e-can}, $\ell=1,\ldots,n$, $\hat a_{j,\ell}\in\mathbb C$, $j=1,\ldots,d$, $\ell=1,\ldots,n$. We rewrite \eqref{e-gue200817yyd}: 
\begin{equation}\label{e-gue200812yydI}
\begin{split}
&	\frac{\pr}{\pr y_j}=\frac{\pr}{\pr x_{2j-1}}+\sum^n_{\ell=1} a_{j,\ell}\frac{\pr}{\pr z_\ell}+
	\sum_{\ell=1}^n \bar a_{j,\ell}\frac{\pr}{\pr \bar z_\ell}+b_j \frac{\pr}{\pr x_{2n+1}},\ \ j=1,\ldots,d,\\
	&	J(\frac{\pr}{\pr y_j})=\frac{\pr}{\pr y_{2j-1}}+i\sum^n_{\ell=1} a_{j,\ell}\frac{\pr}{\pr z_\ell}-i
	\sum^n_{\ell=1} \bar a_{j,\ell}\frac{\pr}{\pr \bar z_\ell}+c_j \frac{\pr}{\pr x_{2n+1}},\ \ j=1,\ldots,d,
	\end{split}
\end{equation}
where $\hat a_{j,\ell}=\delta_{j,\ell}+a_{j,\ell}$, $b_j, c_j\in\mathbb R$, $j=1,\ldots,d$, $\ell=1,\ldots,n$. 
We need

\begin{proposition}\label{p-210}
With the notations used above, we have  $a_{j,\ell}=O(|y'|), b_j=O(|y'|^2)$, $j=1,\ldots,d$, $\ell=1,\ldots,n$.
\end{proposition}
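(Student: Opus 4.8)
The plan is to prove the two estimates separately: the bound $a_{j,\ell}=O(|y'|)$ is essentially immediate from the normalization \eqref{e-gue200810yydII}, while sharpening $b_j$ from $O(|y'|)$ to $O(|y'|^2)$ requires the extra geometric input that the orbit $Y_p$ lies in $\mu^{-1}(0)$. First I would read off $a_{j,\ell}=O(|y'|)$ directly. By the first line of \eqref{e-gue200810yydII}, along the slice $(y_1,\ldots,y_d,0,\ldots,0)$ the vector field $\frac{\pr}{\pr y_j}-\frac{\pr}{\pr x_{2j-1}}$ has all of its components in the frame $\frac{\pr}{\pr x_1},\ldots,\frac{\pr}{\pr x_{2n+1}}$ of size $O(|y'|)$. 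Since the $z_\ell$ are constant-coefficient linear combinations of the $x_k$, the passage to the frame $\set{\frac{\pr}{\pr z_\ell},\frac{\pr}{\pr\bar z_\ell},\frac{\pr}{\pr x_{2n+1}}}$ is invertible with constant coefficients, so the coefficients $a_{j,\ell}$ and $b_j$ in \eqref{e-gue200812yydI} are also $O(|y'|)$. This gives the first assertion together with the preliminary bound $b_j=O(|y'|)$, which I still have to improve.

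Next I would work out the Taylor behaviour of the canonical coordinates along the slice. Writing $x=x(y')$ for the point with $y$-coordinates $(y_1,\ldots,y_d,0,\ldots,0)$, the relation $\frac{\pr}{\pr y_j}=\frac{\pr}{\pr x_{2j-1}}+O(|y'|)$ means $\frac{\pr x_k}{\pr y_j}(y')=\delta_{k,2j-1}+O(|y'|)$; since $x(0)=0$, integrating along the slice yields $x_{2\ell-1}=y_\ell+O(|y'|^2)$ for $1\le\ell\le d$, while $x_{2\ell}=O(|y'|^2)$ for $1\le\ell\le d$ and $x_k=O(|y'|^2)$ for $2d+1\le k\le 2n+1$. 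In terms of $z_\ell=x_{2\ell-1}+ix_{2\ell}$ this reads $z_\ell=y_\ell+O(|y'|^2)$ for $\ell\le d$ and $z_\ell=O(|y'|^2)$ for $\ell>d$; in particular $|z|=O(|y'|)$ and ${\rm Im\,}z_\ell=x_{2\ell}=O(|y'|^2)$ for every $\ell$.

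Then I would use $Y_p\subset\mu^{-1}(0)$ to identify $b_j$. Since the $\frac{\pr}{\pr y_j}$ span $\underline{\mathfrak g}$, write $\frac{\pr}{\pr y_j}=(\xi_j)_X$ with $\xi_j\in\mathfrak g$; by equivariance of $\mu$ and $\mu(p)=0$ one has $\mu\equiv0$ on $Y_p$, so $\langle\omega_0,\frac{\pr}{\pr y_j}\rangle=\langle\mu,\xi_j\rangle=0$ along the slice, i.e. $\frac{\pr}{\pr y_j}$ is a section of $\mathbb{C}HX$ there, which is exactly \eqref{e-gue200817yyd}. Using the BRT expression $\omega_0=-dx_{2n+1}+i\sum_{\ell=1}^n\bigl(\frac{\pr\varphi}{\pr z_\ell}\,dz_\ell-\frac{\pr\varphi}{\pr\bar z_\ell}\,d\bar z_\ell\bigr)$ and pairing it with \eqref{e-gue200812yydI}, the identity $\langle\omega_0,\frac{\pr}{\pr y_j}\rangle=0$ becomes
\begin{equation*}
b_j=-2\,{\rm Im\,}\Bigl(\sum_{\ell=1}^n\hat a_{j,\ell}\frac{\pr\varphi}{\pr z_\ell}\Bigr),\qquad\hat a_{j,\ell}=\delta_{j,\ell}+a_{j,\ell}.
\end{equation*}
Since $\varphi=\sum_k|z_k|^2+O(|z|^4)$ has no cubic part, $\frac{\pr\varphi}{\pr z_\ell}=\bar z_\ell+O(|z|^3)=\bar z_\ell+O(|y'|^3)$, whence $\sum_\ell\hat a_{j,\ell}\frac{\pr\varphi}{\pr z_\ell}=\bar z_j+\sum_\ell a_{j,\ell}\bar z_\ell+O(|y'|^3)$. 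Here ${\rm Im\,}\bar z_j=-x_{2j}=O(|y'|^2)$ by the previous step, while $a_{j,\ell}=O(|y'|)$ and $\bar z_\ell=O(|y'|)$ force $\sum_\ell a_{j,\ell}\bar z_\ell=O(|y'|^2)$; taking imaginary parts gives $b_j=O(|y'|^2)$, as required.

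The main obstacle is the second step: from the merely first-order normalization \eqref{e-gue200810yydII} one must extract the sharper fact that the transverse coordinates, and above all the imaginary parts ${\rm Im\,}z_\ell=x_{2\ell}$, vanish to \emph{second} order along the orbit. It is precisely this quadratic vanishing of ${\rm Im\,}z_j$, combined with the horizontality $\langle\omega_0,\frac{\pr}{\pr y_j}\rangle=0$ coming from $Y_p\subset\mu^{-1}(0)$, that upgrades $b_j=O(|y'|)$ to $b_j=O(|y'|^2)$; the remaining manipulations are routine Taylor bookkeeping.
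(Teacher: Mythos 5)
Your proposal is correct and follows essentially the same route as the paper: both rest on the identity $\omega_0(\tfrac{\pr}{\pr y_j})=0$ along the slice $(y_1,\ldots,y_d,0,\ldots,0)$ (which is the local $G$-orbit of $p$, contained in $\mu^{-1}(0)$), combined with the first-order normalization \eqref{e-gue200810yydII} and the quartic normal form of $\varphi$ to conclude that every other term in the pairing is $O(|y'|^2)$. Your write-up merely makes explicit the Taylor bookkeeping (in particular the quadratic vanishing of $x_{2\ell}$, hence of ${\rm Im\,}z_\ell$, along the orbit) that the paper asserts without detail.
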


\begin{proof}
We only need to show $b_j=O(|y'|^2)$. At $(y_1,\ldots,y_d,0,\ldots,0)\in\mu^{-1}(0)$, we have 
\begin{equation}\label{e-gue200812yydII}
0=\omega_0(\frac{\pr}{\pr y_j})=\omega_0(\frac{\pr}{\pr x_{2j-1}})+\sum_{\ell=1}^n a_{j,l}\omega_0(\frac{\pr}{\pr z_\ell})+
	\sum_{\ell=1}^n \bar a_{j,\ell}\omega_0(\frac{\pr}{\pr \bar z_\ell})-b_j, 
\end{equation}
 $j=1,\ldots,d$. Note that at $(y_1,\ldots,y_d,0,\ldots,0)$, we have
 \[\begin{split}
 &\omega_0(\frac{\pr}{\pr x_{2j-1}})=O(\abs{y'}^2),\ \ j=1,\ldots,d,\\
 &\sum_{\ell=1}^n a_{j,\ell}\omega_0(\frac{\pr}{\pr z_\ell})+
	\sum_{\ell=1}^n \bar a_{j,\ell}\omega_0(\frac{\pr}{\pr \bar z_\ell})=O(\abs{y'}^2).\end{split}\]
From this observation and \eqref{e-gue200812yydII}, the proposition follows. 
\end{proof}

We claim that the third order term of $h$ vanishes.
\begin{proposition}\label{p-1910}
Let $h$ be the function defined in Theorem \ref{t-1911} with $F(y')=\Phi(y',0)$, then
$h=O(|y'|^4)$.
\end{proposition}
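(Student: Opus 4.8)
The plan is to reduce the claim to the vanishing of the third-order Taylor polynomial of $F(y')=\Phi(y',0)$ at $y'=0$. Since $p\in\mu^{-1}(0)$, we have $F(0)=\Phi(p,p)=0$ and $F'(0)=0$ (indeed $\frac{\pr}{\pr y_j}\Phi(p,p)=\langle\,\omega_0(p)\,,\,\frac{\pr}{\pr y_j}\,\rangle=0$), so $h=F-\frac12\langle F''(0)y',y'\rangle$ is automatically $O(\abs{y'}^3)$ and the whole point is to kill its cubic part. First I would simplify $F$ using \eqref{e-gue200810yydII}: setting $w=0$ in \eqref{e-gue200809yyda} leaves only the holomorphic Taylor polynomial of $\varphi$ inside the correction term, and since $\varphi(z)=\sum_j\abs{z_j}^2+O(\abs{z}^4)$ has no holomorphic part of order $\le 3$, that correction is $O(\abs{z}^4)$; hence $\hat\Phi(z,0)=i\sum_\ell\abs{z_\ell}^2+O(\abs{z}^4)$. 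Writing the orbit point in canonical coordinates as $(z(y'),\theta(y'))$, this gives $F(y')=\theta(y')+i\sum_\ell\abs{z_\ell(y')}^2+O(\abs{y'}^4)$, so I would treat the real part $\theta(y')$ and the imaginary part $\sum_\ell\abs{z_\ell(y')}^2$ separately.

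For the imaginary part, applying $dz_\ell$ to \eqref{e-gue200812yydI} gives $\frac{\pr z_\ell}{\pr y_j}=\delta_{j,\ell}+a_{j,\ell}$, and integrating along $t\mapsto ty'$ yields $z_\ell(y')=y_\ell+q_\ell+O(\abs{y'}^3)$ with $q_\ell=\frac12\sum_k y_k a_{k,\ell}^{(1)}$ for $\ell\le d$ (the superscript $(1)$ denoting the first-order part), while $z_\ell(y')=O(\abs{y'}^2)$ for $\ell>d$. As the $y_\ell$ are real, the cubic part of $\sum_\ell\abs{z_\ell(y')}^2$ equals $\sum_{j,k}y_jy_k\operatorname{Re}a_{j,k}^{(1)}$. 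Here I would invoke that $\varphi$ has no third-order term, so $\langle\,Z_m\,|\,Z_{m'}\,\rangle=\delta_{m,m'}+O(\abs{z}^2)$, together with the normal-coordinate property $\langle\,\frac{\pr}{\pr y_j}\,|\,\frac{\pr}{\pr y_k}\,\rangle=2\delta_{j,k}+O(\abs{y}^2)$ from \eqref{e-gue200810yyd}--\eqref{e-gue200811yyd}. Using \eqref{e-gue200817yyd} to expand the left-hand side as $2\delta_{j,k}+2\operatorname{Re}\big(a_{j,k}^{(1)}+\ol{a_{k,j}^{(1)}}\big)+O(\abs{y}^2)$ and matching first-order terms forces $\operatorname{Re}a_{j,k}^{(1)}+\operatorname{Re}a_{k,j}^{(1)}=0$; thus $\operatorname{Re}a_{j,k}^{(1)}$ is antisymmetric in $(j,k)$ and its contraction against the symmetric $y_jy_k$ vanishes. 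This settles the imaginary part.

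For the real part, Proposition~\ref{p-210} gives $\frac{\pr\theta}{\pr y_j}=b_j=O(\abs{y'}^2)$, so $\theta(y')=O(\abs{y'}^3)$; expanding $b_j$ via \eqref{e-gue200812yydII} and $\omega_0(\frac{\pr}{\pr z_\ell})=i\frac{\pr\varphi}{\pr z_\ell}$, its cubic part is a nonzero multiple of $\sum_{j,k}y_jy_k\operatorname{Im}a_{j,k}^{(1)}$. The key input here is that $\omega_0$ restricts to zero on $Y_p$: since $T_xY_p=\underline{\mathfrak g}$ and $\langle\,\omega_0\,,\,\xi_X\,\rangle=\langle\mu,\xi\rangle=0$ on $\mu^{-1}(0)$, we get $\omega_0|_{Y_p}\equiv 0$, hence $d\omega_0(\frac{\pr}{\pr y_j},\frac{\pr}{\pr y_k})\equiv 0$ on $Y_p$; as $d\omega_0(T,\cdot)=0$, this reads $\langle\,J\frac{\pr}{\pr y_j}\,|\,\frac{\pr}{\pr y_k}\,\rangle\equiv 0$ on $Y_p$, since on $HX$ the two-form $d\omega_0$ is a nonzero constant multiple of $(u,v)\mapsto\langle\,Ju\,|\,v\,\rangle$, and to first order it gives the symmetry $\operatorname{Im}a_{j,k}^{(1)}=\operatorname{Im}a_{k,j}^{(1)}$.

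The main obstacle is exactly the last step: the identities above only make $\operatorname{Im}a_{j,k}^{(1)}$ symmetric in its two lower indices, whereas I need the totally symmetric part of $\frac{\pr a_{j,k}}{\pr y_m}(0)$ (over $j,k,m$) to vanish in order to conclude $\sum_{j,k}y_jy_k\operatorname{Im}a_{j,k}^{(1)}=0$, i.e. that $\theta(y')=O(\abs{y'}^4)$. I expect the missing relation to come from the remaining content of Assumption~\ref{a-gue170123I}, namely that $G$ acts by CR transformations preserving $J$: differentiating $\langle\,J\frac{\pr}{\pr y_j}\,|\,\frac{\pr}{\pr y_k}\,\rangle\equiv 0$ along the orbit and using that the fundamental vector fields are CR (so their brackets with the $Z_\ell$ remain in $T^{1,0}X$) should produce the third symmetry. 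Granting this, both $\theta(y')=O(\abs{y'}^4)$ and $\sum_\ell\abs{z_\ell(y')}^2=\sum_{j=1}^d y_j^2+O(\abs{y'}^4)$ hold, whence $F(y')=i\sum_{j=1}^d y_j^2+O(\abs{y'}^4)$ and $h=O(\abs{y'}^4)$.
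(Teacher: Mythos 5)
Your handling of the \emph{imaginary} part of the phase is correct and is, in substance, the paper's own proof of this proposition: the identity ${\rm Re}\,a^{(1)}_{j,k}+{\rm Re}\,a^{(1)}_{k,j}=0$ that you extract from $\langle\,\frac{\pr}{\pr y_j}\,|\,\frac{\pr}{\pr y_k}\,\rangle=2\delta_{j,k}+O(\abs{y'}^2)$ is exactly \eqref{e-211}, and killing the cubic part of $\varphi(z(y'))$ by contracting this antisymmetric array against the symmetric monomials $y_jy_ky_s$ is the same symmetrization the paper performs via \eqref{e-212}--\eqref{e-213}. So that half of your argument matches the paper line for line.

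The gap is the one you flag yourself, and it is genuine, not a loose end. You need the vanishing of the \emph{full} symmetrization over $(j,k,s)$ of ${\rm Im}\,\frac{\pr a_{j,k}}{\pr y_s}(0)$, and the two facts you have established --- antisymmetry of the real part (metric normalization) and symmetry of the imaginary part in $(j,k)$ (isotropy $\omega_0|_{Y_p}=0$) --- cannot imply it. Worse, your proposed fix cannot work either: \emph{all} of the local hypotheses, including the full strength of Assumption~\ref{a-gue170123I} (CR action, preservation of $\omega_0$ and $J$, commutation with the $S^1$ action) and every normalization of Theorem~\ref{t-gue161202} and \eqref{e-gue200810yydII}, are satisfied by the Heisenberg model $n=d=1$, $\varphi(z)=\abs{z}^2$, with the contact lift $\xi_X=i(z-c)\frac{\pr}{\pr z}-i(\ol z-\ol c)\frac{\pr}{\pr\ol z}-2{\rm Re}(\ol cz)\frac{\pr}{\pr x_3}$ of the rotation of $\Complex$ about a center $c$, $\abs c=1$; there $\omega_0(\xi_X)=2-2\abs{z-c}^2$ vanishes on the orbit of $p=(0,0)$, one finds $\langle\,\frac{\pr}{\pr y_1}\,|\,\frac{\pr}{\pr y_1}\,\rangle\equiv2$, ${\rm Re}\,a^{(1)}=0$, ${\rm Im}\,a^{(1)}$ trivially symmetric, and yet the orbit's $x_3$-coordinate is $\theta(y_1)=-2(y_1-\sin y_1)=-\tfrac13y_1^3+O(y_1^5)$. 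Hence the third symmetry is not a consequence of these invariance properties by any local algebra of the kind you are using (only compactness of $X$ fails in this model). Note also that you cannot invoke Proposition~\ref{p-gue200817yyd}, which would finish the job, since the paper proves it \emph{after}, and by means of, Proposition~\ref{p-1910}. Finally, for comparison: the paper's own proof never confronts this point; it opens with the identification $F(y')=\Phi(x(y'),0)=i\varphi(y')+O(\abs{y'}^N)$, which --- since $\Phi(x,0)=x_{2n+1}+\hat\Phi(z,0)$ and the holomorphic corrections in \eqref{e-gue200809yyda} are $O(\abs z^5)$ by \eqref{e-gue200810yydII} --- is precisely the unproved assertion that the orbit's $x_{2n+1}$-coordinate vanishes to high order, i.e.\ the very statement you could not establish; the rest of the paper's proof then treats only the part you did prove. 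So your proposal correctly isolates and makes explicit the one step the paper leaves implicit, but as submitted it does not prove the proposition.
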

\begin{proof}
Since
\begin{equation*}\label{e-210}
F(y')=\Phi(x,0)=\Phi(x(y'),0)=i\varphi(y')+O(|y'|^N),\ \ \mbox{for every $N\in\mathbb N$}, 
\end{equation*}
we have
\begin{equation*}
	\frac{\pr^3 h}{\pr y_j\pr y_\ell\pr y_s}(0)=i\frac{\pr^3 \varphi}{\pr y_j\pr y_\ell\pr y_s}(0),
\end{equation*}
for all $j,\ell,s=1,...,d$.
By the definition of Levi metric, we have
\begin{equation*}
	\begin{split}
		\langle \frac{\pr}{\pr y_j}| \frac{\pr}{\pr y_\ell}\rangle=&
		\langle  \frac{\pr}{\pr z_j}+\frac{\pr}{\pr \bar z_j}+ \sum_{\alpha=1}^n a_{j,\alpha}\frac{\pr}{\pr z_\alpha}+
		\sum_{\alpha=1}^n \bar a_{j,\alpha}\frac{\pr}{\pr \bar z_\alpha}+b_j \frac{\pr}{\pr x_{2n+1}} | \\
		&   \frac{\pr}{\pr z_\ell}+\frac{\pr}{\pr \bar z_\ell}+ \sum_{\beta=1}^n a_{\ell,\beta}\frac{\pr}{\pr z_\beta}+
		\sum_{\beta=1}^n \bar a_{\ell,\beta}\frac{\pr}{\pr \bar z_\beta}+b_\ell \frac{\pr}{\pr x_{2n+1}}  \rangle\\
		=&\frac{\pr^2\varphi}{\pr z_j\pr \bar z_\ell}+\frac{\pr^2\varphi}{\pr \bar z_j \pr z_\ell}+
		\sum_{\beta=1}^n \bar a_{\ell,\beta}\frac{\pr^2\varphi}{\pr z_j\pr \bar z_\beta}+
		\sum_{\beta=1}^n a_{\ell,\beta}\frac{\pr^2\varphi}{\pr \bar z_j\pr z_\beta}\\
		&+\sum_{\alpha=1}^n a_{j,\alpha}\frac{\pr^2\varphi}{\pr z_\alpha\pr \bar z_\ell}+
		\sum_{\alpha=1}^n \bar a_{j,\alpha}\frac{\pr^2\varphi}{\pr \bar z_\alpha\pr z_\ell}+O(|y'|^2)=2\delta_{j,\ell}+O(|y'|^2).
	\end{split}
\end{equation*}
Then
\begin{equation*}
	2\delta_{j,\ell}=\frac{\pr^2\varphi}{\pr z_j\pr \bar z_\ell}+\frac{\pr^2\varphi}{\pr \bar z_j \pr z_\ell}+(a_{\ell,j}+\bar a_{\ell,j})+(a_{j,\ell}+\bar a_{j,\ell}) \mod |y'|^2.
\end{equation*}
So 
\begin{equation}\label{e-211}
\frac{\pr}{\pr y_s}(a_{\ell,j}+\bar a_{\ell,j}+a_{j,\ell}+\bar a_{j,\ell})(0)=0, 
\end{equation}
for $j,\ell,s=1,...,d$.
Note that 
\begin{equation*}
	\frac{\pr\varphi}{\pr y_j}=
	 \frac{\pr\varphi}{\pr z_j}+\frac{\pr\varphi}{\pr \bar z_j}+ \sum_{\alpha=1}^n a_{j,\alpha}\frac{\pr\varphi}{\pr z_\alpha}+
	\sum_{\alpha=1}^n \bar a_{j,\alpha}\frac{\pr\varphi}{\pr \bar z_\alpha}.
\end{equation*}
Then
\begin{equation*}
	\begin{split}
	\frac{\pr^2\varphi}{\pr y_\ell\pr y_j}=&\big(\frac{\pr}{\pr z_\ell}+\frac{\pr}{\pr \bar z_\ell}+ \sum_{\beta=1}^n a_{\ell,\beta}\frac{\pr}{\pr z_\beta}+
	\sum_{\beta=1}^n \bar a_{\ell,\beta}\frac{\pr}{\pr \bar z_\beta}\big)\\
	&\big(\frac{\pr\varphi}{\pr z_j}+\frac{\pr\varphi}{\pr \bar z_j}+ \sum_{\alpha=1}^n a_{j,\alpha}\frac{\pr\varphi}{\pr z_\alpha}+
	\sum_{\alpha=1}^n \bar a_{j,\alpha}\frac{\pr\varphi}{\pr \bar z_\alpha}\big)+\\
	&\sum_{\alpha=1}^n\big(b_\ell \frac{\pr a_{j,\alpha}}{\pr x_{2n+1}}\frac{\pr\varphi}{\pr z_\alpha} +b_\ell \frac{\pr \bar a_{j,\alpha}}{\pr x_{2n+1}}\frac{\pr\varphi}{\pr\bar z_\alpha}\big)\\
	=&\big(\frac{\pr^2\varphi}{\pr z_\ell\pr z_j}+\frac{\pr^2\varphi}{\pr z_\ell\pr \bar z_j} +\frac{\pr^2\varphi}{\pr \bar z_\ell\pr z_j}+\frac{\pr^2\varphi}{\pr \bar z_\ell\pr \bar z_j} \big)+\\
	&\sum_{\alpha=1}^n\big(a_{j,\alpha}\frac{\pr^2\varphi}{\pr z_\ell\pr z_\alpha}+\bar a_{j,\alpha}\frac{\pr^2\varphi}{\pr z_\ell\pr \bar z_\alpha}+a_{j,\alpha}\frac{\pr^2\varphi}{\pr \bar z_\ell\pr z_\alpha}+\bar a_{j,\alpha}\frac{\pr^2\varphi}{\pr \bar z_\ell\pr \bar z_\alpha}  \big)+\\
	&\sum_{\beta=1}^n\big(a_{\ell,\beta}\frac{\pr^2\varphi}{\pr z_j\pr z_\beta}+\bar a_{\ell,\beta}\frac{\pr^2\varphi}{\pr z_j\pr \bar z_\beta}+a_{\ell,\beta}\frac{\pr^2\varphi}{\pr\bar z_j\pr z_\beta}+\bar a_{\ell,\beta}\frac{\pr^2\varphi}{\pr \bar z_j\pr \bar z_\beta}  \big)+\\
	&\sum_{\alpha=1}^n\big(\frac{\pr a_{j,\alpha}}{\pr y_\ell} \frac{\pr\varphi}{\pr z_\alpha}+
	\frac{\pr \bar a_{j,\alpha}}{\pr y_\ell} \frac{\pr\varphi}{\pr \bar z_\alpha} \big)+O(|y'|^2).
	\end{split}
\end{equation*}
Hence
\begin{equation}\label{e-212}
\frac{\pr}{\pr y_s}(\frac{\pr^2\varphi}{\pr y_\ell\pr y_j})(0)=\frac{\pr}{\pr y_s}(a_{\ell,j}+\bar a_{\ell,j}+a_{j,\ell}+\bar a_{j,\ell})(0)+\frac{\pr}{\pr y_j}(a_{\ell,s}+\bar a_{\ell,s})(0).
\end{equation}
Similarly we have
\begin{equation}\label{e-213}
\frac{\pr}{\pr y_\ell}(\frac{\pr^2\varphi}{\pr y_s\pr y_j})(0)=\frac{\pr}{\pr y_\ell}(a_{s,j}+\bar a_{s,j}+a_{j,s}+\bar a_{j,s})(0)+\frac{\pr}{\pr y_j}(a_{s,\ell}+\bar a_{s,\ell})(0).
\end{equation}
By \eqref{e-212}, \eqref{e-213} and \eqref{e-211}, we have
\begin{equation*}\label{e-214}
\begin{split}
&2\frac{\pr^3 \varphi}{\pr y_j\pr y_\ell\pr y_s}(0)=\frac{\pr}{\pr y_s}(a_{\ell,j}+\bar a_{\ell,j}+a_{j,\ell}+
\bar a_{j,\ell})(0)+\\
&\frac{\pr}{\pr y_\ell}(a_{s,j}+\bar a_{s,j}+a_{j,s}+\bar a_{j,s})(0)+\frac{\pr}{\pr y_j}(a_{s,\ell}+\bar a_{s,\ell}+a_{\ell,s}+\bar a_{\ell,s})(0)=0.
\end{split}
\end{equation*}
The proof is completed.
\end{proof}

We can actually show 

\begin{proposition}\label{p-gue200817yyd}
At $(y_1,\ldots,y_d,0,\ldots,0)$, we have $\frac{\pr a_{j,\ell}}{\pr y_s}(0)=0$, $j,s=1,\ldots,d$, $\ell=1,\ldots,n$, where 
$a_{j,\ell}$, $j=1,\ldots,d$, $\ell=1,\ldots,n$, are as in \eqref{e-gue200812yydI}. 
\end{proposition}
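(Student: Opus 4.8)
The plan is to show that the first–order Taylor coefficients of the frame components of the orbit directions vanish, by playing the commutativity of the coordinate fields $\partial/\partial y_1,\dots,\partial/\partial y_d$ against the $G$-invariance of the complex structure $J$. First I would set up the reduction. On $\mu^{-1}(0)$ we have $\langle\,\omega_0\,,\,\partial/\partial y_j\,\rangle=0$ for $j=1,\dots,d$, so by Proposition~\ref{p-210} the field $\partial/\partial y_j$ has no $T$-component to first order along the orbit and we may write $\partial/\partial y_j=W_j+\overline{W_j}$ with $W_j:=(\partial/\partial y_j)^{1,0}=\sum_{\ell=1}^n\hat a_{j,\ell}Z_\ell$, where $\hat a_{j,\ell}=\delta_{j,\ell}+a_{j,\ell}$ and $\hat a_{j,\ell}(0)=\delta_{j,\ell}$. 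Thus the claim is exactly $\partial_{y_s}\hat a_{j,\ell}(0)=\partial_{y_s}a_{j,\ell}(0)=0$ for $s,j=1,\dots,d$, $\ell=1,\dots,n$. Throughout I use that $\varphi$ has no cubic term (see \eqref{e-gue200810yydII}), so the Levi matrix $g_{m\bar{m'}}=\partial^2\varphi/\partial z_m\partial\overline z_{m'}$ satisfies $\partial_{y_s}g_{m\bar{m'}}(0)=0$; hence reading off $Z_\ell$-coefficients at $0$ amounts to pairing with the coframe, which is orthonormal at $0$.

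The next step is a symmetry relation coming from commutativity. Since $\partial/\partial y_s,\partial/\partial y_j$ are coordinate fields, $[\partial/\partial y_s,\partial/\partial y_j]=0$, and for the flat connection $\nabla$ attached to the $y$-coordinates this gives $\nabla_{\partial/\partial y_s}(\partial/\partial y_j)=0$ together with $\nabla_{\partial/\partial y_s}Z_j=[\partial/\partial y_s,Z_j]$. Expanding $\partial/\partial y_j=W_j+\overline{W_j}$, projecting the identity onto $T^{1,0}X$ at $0$ and using $\hat a_{j,\ell}(0)=\delta_{j,\ell}$ yields $\partial_{y_s}\hat a_{j,\ell}(0)=-\big([\partial/\partial y_s,Z_j]+[\partial/\partial y_s,\overline{Z_j}]\big)^{1,0}_\ell(0)$. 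I would then compute this bracket directly from \eqref{e-gue200812yydI}: because $\varphi_{z}(0)=0$, the brackets $[\partial/\partial z_m,Z_j]$ and $[\partial/\partial\overline z_m,Z_j]$ lie in $\mathbb CT$ at $0$ and contribute nothing to the $(1,0)$-part, leaving $\partial_{y_s}a_{j,\ell}(0)=\partial_{z_j}a_{s,\ell}(0)+\partial_{\overline z_j}a_{s,\ell}(0)=\partial_{x_{2j-1}}a_{s,\ell}(0)$. As $\partial/\partial y_j=\partial/\partial x_{2j-1}$ at $0$, this is the symmetry $\partial_{y_s}a_{j,\ell}(0)=\partial_{y_j}a_{s,\ell}(0)$, valid for every $\ell$.

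The decisive input is the $G$-invariance of $J$. The orbit directions span $\underline{\mathfrak{g}}$, so motion along the orbit is the $G$-action, which preserves $J$ (and the Levi metric). Choosing fundamental fields $\zeta_1,\dots,\zeta_d$ with $\zeta_i(p)=\partial/\partial y_i(p)$ and writing $\partial/\partial y_j=\sum_i(N^{-1})_{ji}\zeta_i$ with $N(0)=I$, I would use $\mathcal L_{\zeta_i}J=0$ to obtain, for $V_i:=\zeta_i^{1,0}\in C^\infty(X,T^{1,0}X)$, the transport law $[\zeta_i,V_j]=(\mathcal L_{\zeta_i}\zeta_j)^{1,0}=-\sum_k c^k_{ij}V_k$, where $c^k_{ij}$ are the structure constants of $\mathfrak{g}$. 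Feeding this, together with the normal-coordinate expansion of $N$ (whose first derivatives are themselves expressed through the $c^k_{ij}$), into the projection of $\nabla_{\partial/\partial y_s}W_j(0)$ onto $T^{1,0}X$ expresses $\partial_{y_s}a_{j,\ell}(0)$ entirely through the constants $c^{\,\cdot}_{sj}$, which are antisymmetric in $(s,j)$.

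Comparing the two computations then finishes the proof: $\partial_{y_s}a_{j,\ell}(0)$ is symmetric in $(s,j)$ by commutativity yet governed by the antisymmetric $c^{\,\cdot}_{sj}$ by the transport law, so it vanishes; in the abelian directions ($c=0$) the transport law gives the vanishing directly. I expect the main obstacle to be exactly this matching — the careful bookkeeping that aligns the frame-derivative terms $\nabla_{\partial/\partial y_s}Z_j(0)$ with the change of frame $N$ between the commuting coordinate fields and the non-commuting fundamental fields, so that the output is cleanly antisymmetric in $(s,j)$. As a cross-check on the block $1\le\ell\le d$ one may run the equivalent Hermitian argument: the metric normalization \eqref{e-gue200810yyd} gives $\langle\,\partial/\partial y_j\,|\,\partial/\partial y_\ell\,\rangle=2\delta_{j,\ell}+O(|y'|^2)$, while the momentum map forces $\underline{\mathfrak{g}}$ to be isotropic for $d\omega_0$ along $\mu^{-1}(0)$ (since $\langle\,\omega_0\,,\,\partial/\partial y_j\,\rangle\equiv0$ there and the fields commute), whence $\langle\,W_j\,|\,W_\ell\,\rangle=\delta_{j,\ell}+O(|y'|^2)$ and $\partial_{y_s}a_{j,\ell}(0)+\overline{\partial_{y_s}a_{\ell,j}(0)}=0$; this pins down the Hermitian-symmetric part but leaves precisely the $J$-twist handled above.
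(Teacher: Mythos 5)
Your steps 1--2 and the closing cross-check are correct: the flat-connection/commutativity argument does give the symmetry $\pr_{y_s}a_{j,\ell}(0)=\pr_{y_j}a_{s,\ell}(0)$ (the extension of $a_{s,\ell}$ off the orbit that the bracket computation needs is harmless, since $\pr_{y_j}(0)=\pr_{x_{2j-1}}(0)$ is tangent to the orbit), and isotropy of $\underline{\mathfrak{g}}$ for $d\omega_0$ on $\mu^{-1}(0)$ together with \eqref{e-gue200810yyd} does give $\pr_{y_s}a_{j,\ell}(0)+\ol{\pr_{y_s}a_{\ell,j}(0)}=0$ for $j,\ell\le d$. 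But these two facts cannot close the argument: a tensor $M_{s,j,\ell}=\pr_{y_s}a_{j,\ell}(0)$ that is symmetric in $(s,j)$ and anti-Hermitian in $(j,\ell)$ is forced only to be $i$ times a real, fully symmetric tensor (the constant tensor $M_{s,j,\ell}\equiv i$ satisfies both constraints), and for $\ell>d$ the cross-check says nothing at all. So the whole weight rests on step 3, and that step has a genuine gap.

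The gap is that $\pr_{y_s}a_{j,\ell}(0)$ is \emph{not} an invariant of the group action, so it cannot be ``expressed entirely through the structure constants $c^k_{sj}$''. First, writing $\pr_{y_j}=\sum_i(N^{-1})_{ji}\zeta_i$, the bracket relations fix only the part of $\pr_{y_s}(N^{-1})_{jk}(0)$ antisymmetric in $(s,j)$; the symmetric part is the Hessian at $e_0$ of the chosen coordinates $y'$ on $G$, and under a reparametrization $\tilde y'=f(y')$ with $df(0)=\mathrm{id}$ the quantity $\pr_{y_s}a_{j,\ell}(0)$, $\ell\le d$, shifts by $\pr^2f(0)$. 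Hence any correct proof must use the defining normalization \eqref{e-gue200810yyd} of Theorem~\ref{t-gue161202}; your main line never invokes it, and your ``normal-coordinate expansion of $N$'' tacitly assumes $y'$ are group-exponential coordinates, whereas the paper's $y'$ are normal coordinates for the left-invariant metric $\langle\,\cdot\,|\,\cdot\,\rangle_{d\mu}$, which agree with exponential coordinates only when that metric is bi-invariant. Second, the transport law $[\zeta_s,V_j]=-\sum_kc^k_{sj}V_k$ controls the derivative of $V_j$ along the orbit only up to the term $\nabla_{V_j}\zeta_s$, and $V_j(0)=Z_j(0)$ points partly in the direction $J\pr_{y_j}$, transverse to $\mu^{-1}(0)$; so first derivatives of the fundamental fields \emph{off} $\mu^{-1}(0)$ enter, and these are not structure constants. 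Your own test case exposes this: for $G=S^1$ all $c$'s vanish, yet $[\zeta_1,V_1]=0$ (invariance of $V_1$ under the flow) does not make the $Z$-frame components of $V_1$ constant along the orbit, because the BRT frame is not $G$-invariant -- the discrepancy is exactly the uncontrolled term $\nabla_{V_1}\zeta_1$, so the ``vanishing directly'' claim fails precisely in the easiest case. By contrast, the paper's proof is a coordinate computation that uses the normalization twice: it reruns the Levi-metric argument of Proposition~\ref{p-1910} for $\pr_{y_j}$ and then for $J(\pr_{y_j})$ (both lie in $HX$ along the orbit and satisfy $\langle\,\cdot\,|\,\cdot\,\rangle=2\delta_{j\ell}+O(|y'|^2)$), and combines this with the vanishing of the third-order terms of $\varphi$ from \eqref{e-gue200810yydII} to get $\pr_{y_s}(a_{j,\ell}+\bar a_{j,\ell})(0)=0$ and $\pr_{y_s}(a_{j,\ell}-\bar a_{j,\ell})(0)=0$ separately; your cross-check is the germ of that argument, but the structure-constant mechanism cannot replace it.
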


\begin{proof}
From the proof of Proposition~\ref{p-1910} (see \eqref{e-212}), we see that 
\begin{equation}\label{e-gue200817yyds}
\frac{\pr}{\pr y_s}(a_{j,\ell}+\bar a_{j,\ell})(0)=0,\ \ j,s=1,\ldots,d, \ell=1,\ldots,n.
\end{equation}
Note that $\langle\,J(\frac{\pr}{\pr y_j})\,|\,J(\frac{\pr}{\pr y_\ell})\,\rangle=2\delta_{j,\ell}+O(\abs{y'}^2)$,  $j,\ell=1,\ldots,d$.
From this observation and \eqref{e-gue200812yydI}, we can repeat the proof of Proposition~\ref{p-1910} and deduce that 
\begin{equation}\label{e-gue200817yydu}
\frac{\pr}{\pr y_s}(a_{j,\ell}-\bar a_{j,\ell})(0)=0,\ \ j, s=1,\ldots,d, \ell=1,\ldots,n.
\end{equation}
From \eqref{e-gue200817yyds} and \eqref{e-gue200817yydu}, the proposition follows. 
\end{proof}

It follows from Proposition \ref{p-1910} that we only consider the cases when $2\nu\geq 4\mu$ for $L_j u$ in \eqref{e-1917}. 
Now we compute $L_0 u$ and $L_1 u$.

Let $j=0$, then $\mu=\nu=0$. From \eqref{e-1912} and \eqref{e-gue200811yyda}, We have
\begin{equation}\label{e-215}
\begin{split}
L_0 u&=u(0)=a(0,0,m)\ol{\chi_k(0)}V(0)\tau(0)\\
&=d_kV(0)\big(a_0(0)m^n+a_1(0)m^{n-1} +a_2(0)m^{n-2} +\cdots \big).
\end{split}
\end{equation}
Then 
\begin{equation}\label{e-216}
b_{0,k}(p)=\pi^{\frac{d}{2}}d_k^2\frac{a_0(p)}{V_{{\rm eff\,}}(p)}2^{\frac{d}{2}}
=2^{\frac{d}{2}-1}\pi^{\frac{d}{2}-n-1}\frac{d_k^2}{V_{{\rm eff\,}}(p)}.
\end{equation}

Let $j=1$, then $\nu=1,\mu=0$ or $\nu=2,\mu=1$. Set 
\begin{equation}\label{e-gue200812ycda}
\begin{split}
&m^{-1}L_1 u:=A_1+A_2,\\
&A_1=i^{-1}2^{-1}(\langle\,F'(0)D\,,\,D\,\rangle u)(0),\\
&A_2=i^{-1}2^{-2}(\langle\,F'(0)D\,,\,D\,\rangle)^2(\frac{hu}{2})(0).
\end{split}
\end{equation}
Recall that $(\frac{\pr}{\pr y_j}V)(0)=0$, for $j=1,...,d$. When $\nu=1, \mu=0$, 
\begin{equation}\label{e-217}
\begin{split}
A_1=&\frac{1}{4m}\Delta u(0)=\frac{1}{4m}\sum_{j=1}^d\frac{\pr^2}{\pr y_j^2}
\big(a(y',0,m) V(y')\ol{\chi_k(y')}\tau(y') \big)(0)\\
=&\frac{1}{4m}\sum_{j=1}^d\big(
a_0(0) \frac{\pr^2 V}{\pr y_j^2}(0)d_km^n+a_0(0)V(0)\frac{\pr^2 \ol{\chi_k}}{\pr y_j^2}(0)m^n\big)+O(m^{n-2}).
\end{split}
\end{equation}
When $\nu=2$, by using Proposition \ref{p-1910}, we deduce that
\begin{equation}\label{e-218}
\begin{split}
A_2=&\frac{i}{32m}\Delta^2(hu)(0)=\frac{i}{32m}\sum_{j,\ell=1}^d\frac{\pr^4(hu)}{\pr y_j^2\pr y_\ell^2}(0)
=\frac{i}{32m}\sum_{j,\ell=1}^d\frac{\pr^4h}{\pr y_j^2\pr y_\ell^2}(0) u(0)\\
=&\frac{i}{32m}\sum_{j,\ell=1}^d\frac{\pr^4h}{\pr y_j^2\pr y_\ell^2}(0)a_0(0) V(0)d_km^n+O(m^{n-2}).
\end{split}
\end{equation}
Combining \eqref{e-215}, \eqref{e-217} and \eqref{e-218}, we get
\begin{equation}\label{e-gue200816ycdh}
\begin{split}
b_{1,k}(p)=&\pi^{\frac{d}{2}}d_k\big(a_1(0) d_kV(0)+\frac{1}{4}a_0(0)V(0)\Delta\ol{\chi_k}(0)+\\
&\frac{1}{4}a_0(0)d_k\Delta V(0)+
\frac{i}{32}a_0(0)V(0)d_k\Delta^2h(0) \big).
\end{split}
\end{equation}

We need 

\begin{proposition}\label{p-gue200824yyd}
\begin{equation}\label{e-gue200824yyd}
\Delta V(0) = 2^{\frac{d}{2}-2}\frac{1}{V_{{\rm eff\,}}(p)}\sum_{s, j=1}^d \frac{\pr^2}{\pr y_s^2} \langle\, \frac{\pr}{\pr y_j} \,|\, \frac{\pr}{\pr y_j} \,\rangle(p).
\end{equation}
\end{proposition}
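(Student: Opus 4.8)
The plan is to recognize $V(y')$ as a Gram-determinant density and then differentiate its logarithm at the center $y'=0\leftrightarrow p$. By definition $\langle\,\cdot\,|\,\cdot\,\rangle_{d\mu}$ is the metric on $G$ whose Riemannian volume form is $d\mu$, so writing $d\mu=V(y')\,dy'$ in the coordinates of Theorem~\ref{t-gue161202} is the same as saying $V(y')=\sqrt{\det(g_{ij}(y'))}$ with $g_{ij}(y'):=\langle\,\frac{\pr}{\pr y_i}\,|\,\frac{\pr}{\pr y_j}\,\rangle_{d\mu}$. Rather than carry the $d\mu$-metric, I would pass to the Levi metric on the orbit: since $d\hat\mu$ and $d\mu$ are both Haar measures, with total masses $V_{{\rm eff\,}}(p)$ and $1$ (see \eqref{e-gue170108}), one has $d\hat\mu=V_{{\rm eff\,}}(p)\,d\mu$, and because $d\hat\mu$ corresponds to $dv_{Y_p}$ under the orbit map $g\mapsto g\circ p$, its density in the $y'$-coordinates is $\tilde V(y'):=\sqrt{\det(\tilde g_{ij}(y'))}$ with $\tilde g_{ij}:=\langle\,\frac{\pr}{\pr y_i}\,|\,\frac{\pr}{\pr y_j}\,\rangle$ the Levi Gram matrix of the orbit. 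Hence $V=V_{{\rm eff\,}}(p)^{-1}\tilde V$, and it suffices to compute $\Delta\tilde V(0)$; this route uses only the volume relation and keeps the right-hand side of \eqref{e-gue200824yyd} expressed through the Levi metric.

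The crucial input, already available, is that the frame $\frac{\pr}{\pr y_1},\ldots,\frac{\pr}{\pr y_d}$ is normal to second order at $p$: the computation in the proof of Proposition~\ref{p-1910} gives $\tilde g_{ij}(y')=2\delta_{ij}+O(|y'|^2)$ along $(y_1,\ldots,y_d,0,\ldots,0)$, so $\tilde g_{ij}(0)=2\delta_{ij}$ and $\frac{\pr}{\pr y_s}\tilde g_{ij}(0)=0$ for all $i,j,s$. I would then use $\log\tilde V=\tfrac12\operatorname{tr}\log\tilde g$. Jacobi's formula gives $\frac{\pr}{\pr y_s}\log\tilde V=\tfrac12\operatorname{tr}(\tilde g^{-1}\frac{\pr}{\pr y_s}\tilde g)$, which vanishes at $0$ since $\frac{\pr}{\pr y_s}\tilde g(0)=0$; differentiating once more and dropping the term $-\operatorname{tr}(\tilde g^{-1}\frac{\pr}{\pr y_s}\tilde g\,\tilde g^{-1}\frac{\pr}{\pr y_s}\tilde g)$ (again by $\frac{\pr}{\pr y_s}\tilde g(0)=0$) leaves $\frac{\pr^2}{\pr y_s^2}\log\tilde V(0)=\tfrac12\operatorname{tr}(\tilde g(0)^{-1}\frac{\pr^2}{\pr y_s^2}\tilde g(0))=\tfrac14\sum_{j}\frac{\pr^2}{\pr y_s^2}\tilde g_{jj}(0)$, using $\tilde g(0)^{-1}=\tfrac12 I$. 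Because $\frac{\pr}{\pr y_s}\log\tilde V(0)=0$, the chain rule reduces to $\frac{\pr^2}{\pr y_s^2}\tilde V(0)=\tilde V(0)\,\frac{\pr^2}{\pr y_s^2}\log\tilde V(0)$ with $\tilde V(0)=2^{d/2}$; summing over $s$ yields $\Delta\tilde V(0)=2^{\frac{d}{2}-2}\sum_{s,j=1}^d\frac{\pr^2}{\pr y_s^2}\langle\,\frac{\pr}{\pr y_j}\,|\,\frac{\pr}{\pr y_j}\,\rangle(p)$.

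Multiplying by $V_{{\rm eff\,}}(p)^{-1}$ through $V=V_{{\rm eff\,}}(p)^{-1}\tilde V$ then gives \eqref{e-gue200824yyd} directly. The computation is routine matrix calculus once the setup is fixed; the main thing requiring care, and what I view as the only real obstacle, is the first-order bookkeeping that makes every linear-in-$y'$ contribution drop out, namely verifying $\frac{\pr}{\pr y_s}\tilde g_{ij}(0)=0$ from Proposition~\ref{p-1910} and confirming that both the cross term in the second derivative of $\log\det\tilde g$ and the $(\frac{\pr}{\pr y_s}\log\tilde V)^2$ term in the chain rule vanish at the center. As a consistency check I would verify $\tilde V(0)=2^{d/2}$ against \eqref{e-gue200811yyda}, where $V(0)=2^{\frac{d}{2}}V_{{\rm eff\,}}(p)^{-1}$ and $\frac{\pr}{\pr y_j}V(0)=0$, so that $V=V_{{\rm eff\,}}(p)^{-1}\tilde V$ carries the correct value and vanishing first derivatives at the center before concluding.
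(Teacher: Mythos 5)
Your proposal is correct and follows essentially the same route as the paper: both write $V(y')=V_{{\rm eff\,}}(p)^{-1}\bigl(\det\langle\,\frac{\pr}{\pr y_j}\,|\,\frac{\pr}{\pr y_\ell}\,\rangle\bigr)^{\frac{1}{2}}$, exploit that the Levi Gram matrix is $2\delta_{j,\ell}+O(|y'|^2)$ so all first-order terms drop, and compute the second derivatives through the logarithm of the determinant. The only differences are cosmetic: you invoke Jacobi's formula (${\rm tr}\log$) where the paper reduces $\log\det$ to the diagonal entries modulo $O(|y'|^4)$, and you derive the factor $V_{{\rm eff\,}}(p)^{-1}$ from uniqueness of Haar measure rather than citing \eqref{e-gue200811yyd} directly.
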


\begin{proof}
Recall that, in $y'$-coordinates, we write $d\mu(g)=V(y')dy'$. 
From \eqref{e-gue200811yyd} and Taylor formula, we have 
\[
\begin{split}
\Delta V(y') = & \Delta \left( \frac{1}{V_{{\rm eff\,}}(p)}\left( \det \langle\, \frac{\pr}{\pr y_j} \,|\, \frac{\pr}{\pr y_\ell} \,\rangle^d_{j, \ell=1}  \right)^{\frac{1}{2}} \right)        \\ 
= & \frac{1}{V_{{\rm eff\,}}(p)} \sum_{s=1}^d \frac{\pr^2}{\pr y_s^2} \left(\exp \frac{1}{2} \log \det \langle\, \frac{\pr}{\pr y_j} \,|\, \frac{\pr}{\pr y_\ell} \,\rangle^d_{j, \ell=1}   \right)  \\
= & \frac{1}{V_{{\rm eff\,}}(p)} \sum_{s=1}^d \frac{\pr^2}{\pr y_s^2} \left(\exp \frac{1}{2} \sum_{j=1}^d \left( \log \langle\, \frac{\pr}{\pr y_j} \,|\, \frac{\pr}{\pr y_j} \,\rangle + O(|y'|^4) \right) \right)  \\
= &\frac{1}{V_{{\rm eff\,}}(p)} \exp \frac{1}{2} \sum_{j=1}^d \left( \log \langle\, \frac{\pr}{\pr y_j} \,|\, \frac{\pr}{\pr y_j} \,\rangle + O(|y'|^4) \right) \\
& \cdot  \frac{1}{2} \left(  \sum_{s, j=1}^d  \frac{\pr^2}{\pr y_s^2} \langle\, \frac{\pr}{\pr y_j} \,|\, \frac{\pr}{\pr y_j} \,\rangle  \langle\, \frac{\pr}{\pr y_j} \,|\, \frac{\pr}{\pr y_j} \,\rangle^{-1} + O(|y'|^2) \right) \\
= &\frac{1}{2} \frac{1}{V_{{\rm eff\,}}(p)} \left( \det \langle\, \frac{\pr}{\pr y_j} \,|\, \frac{\pr}{\pr y_\ell} \,\rangle^d_{j, \ell=1} \right)^{\frac{1}{2}} \\
&  \cdot   \left(  \sum_{s, j=1}^d  \frac{\pr^2}{\pr y_s^2}  \langle\, \frac{\pr}{\pr y_j} \,|\, \frac{\pr}{\pr y_j} \,\rangle  \langle\, \frac{\pr}{\pr y_j} \,|\, \frac{\pr}{\pr y_j} \,\rangle^{-1} + O(|y'|^2) \right). 
\end{split}
\]
Note that $\langle\, \frac{\pr}{\pr y_j} \, | \, \frac{\pr}{\pr y_\ell} \rangle = 2\delta_{j, \ell} + O(|y'|^2)$, $j, \ell=1, \cdots, d$.
Therefore, we derive
\[
\Delta V(0) = 2^{\frac{d}{2}-2}\frac{1}{V_{{\rm eff\,}}(p)}\sum_{s, j=1}^d \frac{\pr^2}{\pr y_s^2} \langle\, \frac{\pr}{\pr y_j} \,|\, \frac{\pr}{\pr y_j} \,\rangle(p).
\]
\end{proof}

\section{Global expression for coefficients of lower order terms}
We now express $b_{1,k}$ as geometric invariants of $X$ and $G$. 
From now on, we fix $p\in\mu^{-1}(0)$ and we will use the same notations as before. From \eqref{e-gue200811yyd}, we see that the orthonormal basis for $T_{e_0}G$ is 
\begin{equation}\label{e-gue200816ycda}
\{\frac{1}{\sqrt{2}}V_{{\rm eff\,}}(p)^{\frac{1}{d}}\frac{\pr}{\pr y_1},...,\frac{1}{\sqrt{2}}V_{{\rm eff\,}}(p)^{\frac{1}{d}}\frac{\pr}{\pr y_d}\}.
\end{equation} 
Let $\Delta_{d\mu}$ be the de Rham Laplacian for functions with respect to $\langle\,\cdot\,|\,\cdot\,\rangle_{d\mu}$. From \eqref{e-gue200816ycda} and \eqref{e-gue200811yyd},
we see that 
\begin{equation}\label{e-224}
\Delta=2V_{{\rm eff\,}}(p)^{-\frac{2}{d}}\Delta_{d\mu}+O(|y'|).
\end{equation}
Recall that $\Delta$ is given by \eqref{e-gue200811yydh}. From \eqref{e-1912}, \eqref{e-gue200811yyda} and \eqref{e-224}, we get 
\begin{equation}\label{e-gue200816ycdb}
\pi^{\frac{d}{2}}d_k\frac{1}{4}a_0(0)V(0)(\Delta\ol\chi_k)(0)= \frac{1}{4} \pi^{\frac{d}{2}-n-1}2^{\frac{d}{2}} \frac{d_k}{ V_{{\rm eff\,}}(p)^{1+\frac{2}{d}}} (\Delta_{d\mu}\ol\chi_k)(e_0).
\end{equation}
Now we compute $\Delta^2h(0)$ in $b_{1,k}$.
Recall that $\Phi(y',0)=\Phi(x,0)=i\varphi(x)=i\varphi(y')+O(\abs{y'}^N)$, for every $N\in\mathbb N$. 
Then
\begin{equation}\label{e-05151}
\Delta^2h(p)=i\Delta^2\varphi.
\end{equation}
It suffices to compute 
\begin{equation*}
\Delta^2\varphi(0)=\sum_{j,\ell=1}^d\frac{\pr^4\varphi}{\pr y_j^2\pr y_\ell^2}(0).
\end{equation*} 
For $a_{j,\ell}$, $b_j$, $j=1,\ldots,d$, $\ell=1,\ldots,n$, in \eqref{e-gue200812yydI}, it is easy to see that at $(y_1,\ldots,y_d,0,\ldots,0)$,  $a_{j,\ell}$, $b_j$, $j,1,\ldots,d$, $\ell=1,\ldots,n$, are independent of $x_{2n+1}$.
From now on, we will use Einstein summation convention. We have 
\begin{equation*}\label{e-710}
\begin{split}
\frac{\pr^2}{\pr y_j^2}=&(\frac{\pr}{\pr z_j}+\frac{\pr}{\pr\bar z_j}+ a_{j,\alpha}\frac{\pr}{\pr z_\alpha}+\bar a_{j,\alpha}\frac{\pr}{\pr \bar z_\alpha}+b_j \frac{\pr}{\pr x_{2n+1}}   )\\
&(\frac{\pr}{\pr z_j}+\frac{\pr}{\pr\bar z_j}+ a_{j,\beta}\frac{\pr}{\pr z_\beta}+\bar a_{j,\beta}\frac{\pr}{\pr \bar z_\beta}+b_j \frac{\pr}{\pr x_{2n+1}}   )\\
=&\frac{\pr^2}{\pr z_j^2}+\frac{\pr^2}{\pr z_j\pr \bar z_j}+\frac{\pr a_{j,\beta}}{\pr z_j}\frac{\pr}{\pr z_\beta}+\frac{\pr \bar a_{j,\beta}}{\pr z_j}\frac{\pr}{\pr\bar z_\beta}+
a_{j,\beta}\frac{\pr^2}{\pr z_j\pr z_\beta}+\bar a_{j,\beta}\frac{\pr^2}{\pr z_j\pr\bar z_\beta}\\
&+\frac{\pr^2}{\pr z_j\pr \bar z_j}+\frac{\pr^2}{\pr \bar z_j^2}+\frac{\pr a_{j,\beta}}{\pr \bar z_j}\frac{\pr}{\pr z_\beta}+\frac{\pr \bar a_{j,\beta}}{\pr\bar z_j}\frac{\pr}{\pr\bar z_\beta}+a_{j,\beta}\frac{\pr^2}{\pr\bar z_j\pr z_\beta}+\bar a_{j,\beta}\frac{\pr^2}{\pr \bar z_j\pr\bar z_\beta}\\
&+a_{j,\alpha}\bigl(\frac{\pr^2}{\pr z_j\pr z_\alpha}+\frac{\pr^2}{\pr z_\alpha\pr \bar z_j}+\frac{\pr a_{j,\beta}}{\pr z_\alpha}\frac{\pr}{\pr z_\beta}+\frac{\pr \bar a_{j,\beta}}{\pr z_\alpha}\frac{\pr}{\pr\bar z_\beta}+
a_{j,\beta}\frac{\pr^2}{\pr z_\alpha\pr z_\beta}+\bar a_{j,\beta}\frac{\pr^2}{\pr z_\alpha\pr\bar z_\beta}\bigr)\\
&+\bar a_{j,\alpha}\bigl(\frac{\pr^2}{\pr z_j\pr \bar z_\alpha}+\frac{\pr^2}{\pr \bar z_j\pr \bar z_\alpha}+\frac{\pr a_{j,\beta}}{\pr \bar z_\alpha}\frac{\pr}{\pr z_\beta}+\frac{\pr \bar a_{j,\beta}}{\pr\bar z_\alpha}\frac{\pr}{\pr\bar z_\beta}+a_{j,\beta}\frac{\pr^2}{\pr\bar z_\alpha\pr z_\beta}+\bar a_{j,\beta}\frac{\pr^2}{\pr \bar z_\alpha\pr\bar z_\beta}   \bigr)\\
&+Q,
\end{split}
\end{equation*}
where $Q$ denotes the sum of all the terms involving $b_j$ and $\frac{\pr}{\pr x_{2n+1}}$.
From now on, we always take values at the point $p$ during computations, where we ignore $p$ with no confusion.
By direct computation and Proposition \ref{p-gue200817yyd}, we have
\begin{equation}\label{e-712}
\begin{split}
\frac{\pr^4\varphi}{\pr y_j^2\pr y_\ell^2}=&\bigl(\frac{\pr^4\varphi}{\pr z_j^2\pr\bar z_\ell^2}+4\frac{\pr^4\varphi}{\pr z_j\pr \bar z_j\pr z_\ell\pr\bar z_\ell}+\frac{\pr^4\varphi}{\pr \bar z_j^2\pr z_\ell^2}    \bigr)\\
&+2\bigl(\frac{\pr^2 a_{j,j}}{\pr z_\ell^2}+\frac{\pr^2 \bar a_{j,j}}{\pr\bar  z_\ell^2}+\frac{\pr^2 a_{j,j}}{\pr\bar z_\ell^2}+\frac{\pr^2\bar a_{j,j}}{\pr z_\ell^2}   \bigr)+4\bigl(\frac{\pr^2 a_{j,j}}{\pr z_\ell\pr\bar z_\ell}+ \frac{\pr^2 \bar a_{j,j}}{\pr z_\ell\pr\bar z_\ell}   \bigr)\\
&+2\bigl(\frac{\pr^2 a_{j,\ell}}{\pr z_j\pr  z_\ell}+\frac{\pr^2 a_{j,\ell}}{\pr\bar  z_j\pr \bar  z_\ell}+\frac{\pr^2\bar  a_{j,\ell}}{\pr\bar  z_j\pr  z_\ell}+\frac{\pr^2 a_{j,\ell}}{\pr z_j\pr\bar  z_\ell}\\
&+\frac{\pr^2\bar  a_{j,\ell}}{\pr z_j\pr  z_\ell}+\frac{\pr^2\bar  a_{j,\ell}}{\pr\bar  z_j\pr \bar  z_\ell}
+\frac{\pr^2\bar a_{j,\ell}}{\pr z_j\pr\bar  z_\ell}+\frac{\pr^2 a_{j,\ell}}{\pr\bar  z_j\pr  z_\ell}   \bigr).
\end{split}
\end{equation}
Note that
\begin{equation}\label{e-05152}
\frac{\pr^2 a_{j,j}}{\pr y_\ell^2}= \frac{\pr^2 a_{j,j}}{\pr z_\ell^2}+2\frac{\pr^2 a_{j,j}}{\pr z_\ell\pr\bar z_\ell}+\frac{\pr^2 a_{j,j}}{\pr\bar  z_\ell^2}
\end{equation}
and
\begin{equation}\label{e-05153}
\frac{\pr^2 a_{j,\ell}}{\pr y_j\pr y_\ell}=\frac{\pr^2 a_{j,\ell}}{\pr z_j\pr z_\ell}+\frac{\pr^2 a_{j, \ell}}{\pr z_j\pr \bar z_\ell}+\frac{\pr^2 a_{j, \ell}}{\pr\bar  z_j\pr z_\ell}+\frac{\pr^2 a_{j,\ell}}{\pr \bar z_j\pr\bar  z_\ell}.
\end{equation}
Then we deduce from \eqref{e-712}, \eqref{e-05152} and \eqref{e-05153} that
\begin{equation}\label{e-713}
\begin{split}
\frac{\pr^4\varphi}{\pr y_j^2\pr y_\ell^2}=&\bigl(\frac{\pr^4\varphi}{\pr z_j^2\pr\bar z_\ell^2}+4\frac{\pr^4\varphi}{\pr z_j\pr \bar z_j\pr z_\ell\pr\bar z_\ell}+\frac{\pr^4\varphi}{\pr \bar z_j^2\pr z_\ell^2}    \bigr)\\
&+2\frac{\pr^2}{\pr y_\ell^2}(a_{j,j}+\bar a_{j,j})+2\frac{\pr^2}{\pr y_j\pr y_\ell}( a_{j,\ell}+\bar  a_{j,\ell}).  
\end{split}
\end{equation}

By the definition of the Levi metric, 
\begin{equation*}\label{e-714}
\begin{split}
\langle \frac{\pr}{\pr y_j}| \frac{\pr}{\pr y_\ell}\rangle=&\langle \hat a_{j,\alpha}\bigl( \frac{\pr}{\pr z_\alpha} +i\frac{\pr \varphi}{\pr z_\alpha}\frac{\pr}{\pr x_{2n+1}} \bigr)+
\ol{\hat a_{j,\alpha}}\bigl( \frac{\pr}{\pr\bar z_\alpha} -i\frac{\pr \varphi}{\pr\bar z_\alpha}\frac{\pr}{\pr x_{2n+1}} \bigr)\big| \\
& \hat a_{\ell,\beta}\bigl( \frac{\pr}{\pr z_\beta} +i\frac{\pr \varphi}{\pr z_\beta}\frac{\pr}{\pr x_{2n+1}} \bigr)+
\ol{\hat a_{\ell, \beta}}\bigl( \frac{\pr}{\pr\bar z_\beta} -i\frac{\pr \varphi}{\pr\bar z_\beta}\frac{\pr}{\pr x_{2n+1}} \bigr)\rangle\\
=&\frac{\pr^2\varphi}{\pr z_j\pr\bar z_\ell}+\frac{\pr^2\varphi}{\pr\bar z_j\pr z_\ell}+\bar a_{\ell, \beta}\frac{\pr^2\varphi}{\pr z_j\pr\bar z_\beta}+ a_{\ell,\beta}\frac{\pr^2\varphi}{\pr\bar z_j\pr z_\beta}\\
&+a_{j,\alpha}\frac{\pr^2\varphi}{\pr\bar z_\ell\pr z_\alpha}+\bar a_{j,\alpha}\frac{\pr^2\varphi}{\pr z_\ell\pr\bar  z_\alpha}+
a_{j,\alpha}\bar a_{\ell,\beta}\frac{\pr^2\varphi}{\pr z_\alpha\pr\bar  z_\beta}+\bar a_{j,\alpha}a_{\ell,\beta}\frac{\pr^2\varphi}{\pr\bar z_\alpha\pr  z_\beta}.
\end{split}
\end{equation*}
Then
\begin{equation}\label{e-715}
\frac{\pr^2}{\pr y_\ell^2}\langle \frac{\pr}{\pr y_j}| \frac{\pr}{\pr y_j}\rangle=4\frac{\pr^4\varphi}{\pr z_j\pr\bar z_j\pr z_\ell\pr\bar z_\ell}+2\frac{\pr^2}{\pr y_\ell^2}(a_{j,j}+\bar a_{j,j}), 
\end{equation}
and
\begin{equation}\label{e-716}
\begin{split}
\frac{\pr^2}{\pr y_j\pr y_\ell} \langle \frac{\pr}{\pr y_j}| \frac{\pr}{\pr y_\ell}\rangle=&\frac{\pr^4\varphi}{\pr z_j^2\pr\bar z_\ell^2}+2\frac{\pr^4\varphi}{\pr z_j\pr \bar z_j\pr z_\ell \pr\bar z_\ell}+\frac{\pr^4\varphi}{\pr \bar z_j^2\pr z_\ell^2} \\
&+\frac{\pr^2}{\pr y_j\pr y_{\ell}} (a_{j,\ell}+\bar a_{j,\ell}+a_{\ell,j}+\bar a_{\ell,j}).   
\end{split}
\end{equation}
By \eqref{e-713}, \eqref{e-715} and \eqref{e-716}, it turns out that
\begin{equation}\label{e-717}
\begin{split}
\frac{\pr^4\varphi}{\pr y_j^2\pr y_\ell^2}=&\frac{1}{2}\frac{\pr^2}{\pr y_j^2}\langle \frac{\pr}{\pr y_\ell}| \frac{\pr}{\pr y_\ell}\rangle+\frac{1}{2}\frac{\pr^2}{\pr y_\ell^2}\langle \frac{\pr}{\pr y_j}| \frac{\pr}{\pr y_j}\rangle\\
&+\frac{\pr^2}{\pr y_j\pr y_\ell}\langle \frac{\pr}{\pr y_j}| \frac{\pr}{\pr y_\ell}\rangle-2\frac{\pr^4\varphi}{\pr z_j\pr \bar z_j\pr z_\ell\pr\bar z_\ell}. 
\end{split}
\end{equation}

Let $h=(h_{j,\ell})^n_{j,\ell=1}$, where $h_{j,\ell}=\langle Z_j| Z_\ell \rangle$, $h^{-1}=(h^{j,\ell})^n_{j,\ell=1}$ and $\theta=(\theta_{j,\ell})=(\pr_b h)h^{-1}$. The Levi metric induces the rigid Chern connection $\nabla$ as follows
\begin{equation*}
	\begin{split}
	\nabla: T^{1,0}X&\rightarrow CT^{\ast}X\otimes T^{1,0}X, \\
	\sum c_j Z_j&\mapsto\sum(dc_j)Z_j+\sum a_j\theta_{j,\ell}Z_l.
	\end{split}
\end{equation*}

\begin{definition}\label{d-416}
The rigid Chern curvature of the connection $\bigtriangledown$ is
\begin{equation*}
	R^{HX}:=\bar{\pr_b}((\pr_b h)h^{-1})=(R_{j,\ell})\in C^\infty (X, T^{\ast 1,1}X\otimes {\rm End}(T^{1,0}X)).
\end{equation*}
\end{definition}
Given $U, V\in T^{1,0}X$, $\eta=\sum\eta_j Z_j\in T^{1,0}X$, we have
\begin{equation*}
	R^{HX}(\bar U,V)\eta=\sum_{j,\ell=1}^n\langle R_{j,\ell}, \bar U\wedge V\rangle \eta_\ell Z_j.
\end{equation*} 

\begin{definition}\label{d-gue200914yyd}
For $x\in\mu^{-1}(0)$, let $\{u_1,\ldots,u_d\}$ be a basis for $\underline{\mathfrak{g}}_x$ such that $\{u_1-iJu_1,\ldots,u_d-iJu_d\}$ is an orthonormal
basis for $\underline{\mathfrak{g}}_x-iJ\underline{\mathfrak{g}}_x$ with respect to $\langle\,\cdot\,|\,\cdot\,\rangle$. The scalar curvature of $X$ in the direction of $G$ is defined by $R_e(x):=\sum^d_{j,\ell=1}\langle\,R^{HX}_x(\bar e_j,e_\ell)e_j\,|\,e_\ell\,\rangle$, where $e_j=u_j-iJu_j$, $j=1,\ldots,d$. 
\end{definition}

It is straightforward to see that Definition~\ref{d-gue200914yyd} is well-defined. 
Let $u_j=\frac{\pr}{\pr y_j}(p)$, $j=1,\ldots,d$. Then, $e_j:=\frac{1}{2}(\frac{\pr}{\pr y_j}(p)-i(J\frac{\pr}{\pr y_j})(p))$, $j=1,\ldots,d$, is an orthonormal basis for 
$\underline{\mathfrak{g}}_p-iJ\underline{\mathfrak{g}}_p$.

\begin{lemma}\label{l-416}
With the notations used above, we have 
\begin{equation*}
	\langle R^{HX}_p(\bar e_s, e_t)e_j|e_\ell\rangle=\frac{\pr^4 \varphi}{\pr\bar z_s\pr z_t\pr\bar z_\ell\pr z_j}(p).
\end{equation*}
Thus, $R_e(p)=\sum_{j,\ell=1}^d \langle R^{HX}_p(\bar e_j, e_\ell)e_j|e_\ell\rangle$.
\end{lemma}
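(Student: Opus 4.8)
The plan is to reduce everything to the Chern curvature of the Levi metric in the BRT chart and to recognise that metric as the complex Hessian of $\varphi$, after which the stated fourth derivatives appear. First I would record that at $p$ the vectors $e_j$ coincide with the BRT frame: from \eqref{e-gue200817yyd} one has $e_j=\frac{1}{2}(\frac{\pr}{\pr y_j}-iJ\frac{\pr}{\pr y_j})=\sum_\ell\hat a_{j,\ell}Z_\ell$, and since $\hat a_{j,\ell}=\delta_{j,\ell}+a_{j,\ell}$ with $a_{j,\ell}=O(\abs{y'})$ by Proposition~\ref{p-210}, we get $e_j(p)=Z_j(p)$ for $j=1,\ldots,d$. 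Thus it suffices to evaluate $\langle\,R^{HX}_p(\ol{Z_s},Z_t)Z_j\,|\,Z_\ell\,\rangle$. Next I would compute the Levi metric in canonical coordinates: from \eqref{e-gue200803yydI} and \eqref{e-can} one finds $\omega_0=-d\theta+i(\pr\varphi-\bar\pr\varphi)$, hence $d\omega_0=-2i\pr\bar\pr\varphi$, so that by \eqref{e-gue200726yyda} the metric components $h_{j,\ell}=\langle\,Z_j\,|\,Z_\ell\,\rangle$ are entries of the complex Hessian of $\varphi$ (the precise slot depending on the conjugation convention in $\langle\,\cdot\,|\,\cdot\,\rangle$), an identity that holds on all of $D$ because $d\omega_0$ has no $d\theta$--component and $\varphi$ is rigid.

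The key simplification comes from the normalization \eqref{e-gue200810yydII}: since $\varphi(z)=\sum_{j=1}^n\abs{z_j}^2+O(\abs{z}^4)$ has no cubic terms, every third-order derivative of $\varphi$ vanishes at $0$. Hence $h(p)=\mathrm{Id}$ and $\pr_b h(p)=\bar{\pr_b}h(p)=0$. I would then expand the curvature of Definition~\ref{d-416},
\[
R^{HX}=\bar{\pr_b}\big((\pr_b h)h^{-1}\big)=(\bar{\pr_b}\pr_b h)h^{-1}-(\pr_b h)\wedge h^{-1}(\bar{\pr_b}h)h^{-1},
\]
and evaluate at $p$: the second term drops because $\pr_b h(p)=0$, and $h^{-1}(p)=\mathrm{Id}$, leaving $R^{HX}(p)=\bar{\pr_b}\pr_b h(p)$. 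Because $h_{j,\ell}$ is rigid ($Th_{j,\ell}=0$), the operators $\pr_b,\bar{\pr_b}$ act on it as $\pr,\bar\pr$ in the $z$-variables, and the coframe contributions to $\bar{\pr_b}\pr_b h$ (which carry a factor $Z_t h_{j,\ell}$, i.e. a third derivative of $\varphi$) vanish at $p$; the $(1,1)$-component therefore reduces to the second $z,\bar z$ derivatives of $h$, namely fourth derivatives of $\varphi$.

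Finally I would assemble the indices using the endomorphism formula $R^{HX}(\ol U,V)\eta=\sum_{a,b}\langle\,R_{a,b}\,,\,\ol U\wedge V\,\rangle\eta_b Z_a$ stated after Definition~\ref{d-416}. Taking $\eta=Z_j$ and pairing with $Z_\ell$, together with $\langle\,Z_a\,|\,Z_\ell\,\rangle(p)=\delta_{a,\ell}$, gives $\langle\,R^{HX}_p(\ol{Z_s},Z_t)Z_j\,|\,Z_\ell\,\rangle=\langle\,R_{\ell,j}(p)\,,\,\ol{Z_s}\wedge Z_t\,\rangle$, and the previous step turns this into the fourth derivative $\frac{\pr^4\varphi}{\pr\bar z_s\pr z_t\pr\bar z_\ell\pr z_j}(p)$ (using that mixed partials commute). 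The second assertion, $R_e(p)=\sum_{j,\ell=1}^d\langle\,R^{HX}_p(\ol e_j,e_\ell)e_j\,|\,e_\ell\,\rangle$, is then immediate from Definition~\ref{d-gue200914yyd} and the fact, recorded just before the lemma, that $e_j=\tfrac12(\frac{\pr}{\pr y_j}(p)-i(J\frac{\pr}{\pr y_j})(p))$ is the chosen orthonormal basis of $\underline{\mathfrak{g}}_p-iJ\underline{\mathfrak{g}}_p$.

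The part requiring the most care is the index/convention bookkeeping — the conjugation slot in $\langle\,\cdot\,|\,\cdot\,\rangle$, the placement of indices in $\theta=(\pr_b h)h^{-1}$ and in the endomorphism $R^{HX}(\ol U,V)$, and the normalization of the pairing $\langle\,\cdot\,,\,\ol U\wedge V\,\rangle$ — so that the four derivative slots land in exactly the positions $\bar z_s,z_t,\bar z_\ell,z_j$ with a plus sign; indeed, the correct convention is pinned down by the requirement that $R_e(p)$ become the genuine trace $\sum_{j,\ell}\frac{\pr^4\varphi}{\pr z_j\pr\bar z_j\pr z_\ell\pr\bar z_\ell}(p)$. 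Everything else is the routine Kähler-type curvature computation for the potential $\varphi$ at a point where the metric is normalized to the identity with vanishing first derivatives.
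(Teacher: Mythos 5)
Your proof is correct and follows essentially the same route as the paper: identify $e_j(p)=Z_j(p)$ via Proposition~\ref{p-210}, realize the Levi metric $h$ as the complex Hessian of $\varphi$ in the BRT chart, use the vanishing of the third-order derivatives of $\varphi$ at $p$ to discard the term where the derivative falls on $h^{-1}$, and read off $R^{HX}(p)=\bar{\pr_b}\pr_b h(p)$ as fourth derivatives of $\varphi$. The only (minor) difference is that you justify the vanishing of cubic terms by the coordinate normalization \eqref{e-gue200810yydII} rather than by citing Proposition~\ref{p-1910} as the paper does; this is if anything slightly cleaner, since the intermediate matrix product $(\pr_b h)h^{-1}$ involves indices running up to $n$, not just $d$.
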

\begin{proof}
By Proposition \ref{p-1910}, we have $\varphi=o(|y'|^4)$.
Then
\begin{equation*}
	\begin{split}
		\bar{\pr_b}((\pr_b h)h^{-1})(p)&=\bar{\pr_b}\bigl((\frac{\pr^3 \varphi}{\pr z_t\pr\bar z_\ell\pr z_j}dz_t)\cdot (\frac{\pr^2 \varphi}{\pr\bar z_\ell\pr z_j})^{-1}   \bigr)(p)\\
		&=\bigl( \frac{\pr^4 \varphi}{\pr\bar z_s\pr z_t\pr\bar z_\ell\pr z_j}d\bar z_s\wedge dz_t   \bigr)(p)\cdot (\frac{\pr^2 \varphi}{\pr\bar z_\ell\pr z_j})^{-1}(p)\\
		&=\bigl( \frac{\pr^4 \varphi}{\pr\bar z_s\pr z_t\pr\bar z_\ell\pr z_j}d\bar z_s\wedge dz_t   \bigr)(p).
	\end{split}
\end{equation*}
Hence
\begin{equation*}
	\langle R^{HX}_p(\bar Z_s, Z_t)Z_j|Z_\ell\rangle(p)=\frac{\pr^4 \varphi}{\pr\bar z_s\pr z_t\pr\bar z_\ell\pr z_j}(p).
\end{equation*}
The lemma is proved following the fact that $e_j(p)=Z_j(p)$, $j=1,\ldots,d$. 
\end{proof}

Recall that the Christoffel symbols of $G$ with respect to $\langle\,\cdot\,|\,\cdot\,\rangle_{d\mu}$ in local coordinates $y'=(y_1,\ldots,y_d)$ are given by 
\[\Gamma_{s,t}^\ell=\frac{1}{2}g^{\ell ,j}\bigl( \frac{\pr g_{j,s}}{\pr y_t}+\frac{\pr g_{j,t}}{\pr y_s}-  \frac{\pr g_{s,t}}{\pr y_j}\bigr),\ \ \ell, s, t=1,\ldots,d,\]
where $g_{j,\ell}=\langle\,\frac{\pr}{\pr y_j}\,|\,\frac{\pr}{\pr y_\ell}\,\rangle_{d\mu}$, $j, \ell=1,\ldots,d$, and $\left(g^{j,\ell}\right)^d_{j,\ell=1}$ is the inverse matrix 
of the matrix $\left(g_{j,\ell}\right)^d_{j,\ell=1}$. The Ricci curvature tensor of $G$ with respect to $\langle\,\cdot\,|\,\cdot\,\rangle_{d\mu}$ in local coordinates $y'=(y_1,\ldots,y_d)$ is given by 
\[R_{j,\ell}:=\sum^d_{a=1}\frac{\pr\Gamma^a_{j,\ell}}{\pr y_a}-\sum^d_{a=1}\frac{\pr\Gamma^a_{a,\ell}}{\pr y_j}+\sum^d_{a,b=1}\Bigr(\Gamma^a_{a,b}\Gamma^b_{j,\ell}-\Gamma^a_{j,b}\Gamma^b_{a,\ell}\Bigr),\ \ j, \ell=1,\ldots,d.\]
The scalar curvature $S_G$ on $G$ with respect to $\langle\,\cdot\,|\,\cdot\,\rangle_{d\mu}$ is defined by 
\begin{equation}\label{e-gue200913yyd}
S_G=g^{j,k}R_{j,k}. 
\end{equation}
We can check that 
\begin{equation*}
	\Gamma_{s,t}^\ell(p)=\frac{1}{2}h^{\ell ,j}\bigl( \frac{\pr h_{j,s}}{\pr y_t}+\frac{\pr h_{j,t}}{\pr y_s}-  \frac{\pr h_{s,t}}{\pr y_j}\bigr)(p)=0,
\end{equation*}
where $h_{j,\ell}=\langle \frac{\pr}{\pr y_j}| \frac{\pr}{\pr y_\ell}\rangle$. Moreover, it is straightforward to see that 
\begin{equation}\label{e-417}
S_G(p)=(V_{{\rm eff\,}}(p))^{\frac{2}{d}}\frac{1}{4}\sum_{j,\ell=1}^d\bigl( \frac{\pr^2}{\pr y_j\pr y_\ell}\langle \frac{\pr}{\pr y_j}| \frac{\pr}{\pr y_\ell}\rangle-\frac{\pr^2}{\pr y_\ell^2}\langle \frac{\pr}{\pr y_j}| \frac{\pr}{\pr y_j}\rangle \bigr)(p).
\end{equation}

It follows from \eqref{e-717}, \eqref{e-417} and Lemma \ref{l-416} that

\begin{equation}\label{e-418}
\Delta^2\varphi(p)=2\sum_{j,\ell=1}^d \frac{\pr^2}{\pr y_j^2} \langle\, \frac{\pr}{\pr y_\ell}  \,|\, \frac{\pr}{\pr y_\ell} \, \rangle+4(V_{{\rm eff\,}}(p))^{-\frac{2}{d}}S_G(p)-2R_e(p).
\end{equation}
By 
\eqref{e-1912}, \eqref{e-gue200816ycdh}, \eqref{e-gue200824yyd}, \eqref{e-gue200816ycdb}, \eqref{e-05151} and \eqref{e-418}, we finally obtain
\begin{equation}\label{e-0519}
\begin{split}
b_{1,k}(p)=&
\frac{1}{4}\pi^{\frac{d}{2}-n-1}2^{\frac{d}{2}}   \frac{d_k^2}{V_{{\rm eff\,}}(p)}R(p)\\
&+ \frac{1}{4}\pi^{\frac{d}{2}-n-1}2^{\frac{d}{2}}  \frac{d_k}{V_{{\rm eff\,}}(p)^{1+\frac{2}{d}}} \Delta_{d\mu}\bar\chi_k(p)\\
&-\frac{1}{64}\pi^{\frac{d}{2}-n-1} 2^{\frac{d}{2}} \frac{d_k^2}{V_{{\rm eff\,}}(p)}(4(V_{{\rm eff\,}}(p))^{-\frac{2}{d}}S_G(p)-2R_e(p)).
\end{split}
\end{equation}
From \eqref{e-0519}, Theorem~\ref{t-gue170128I} follows then.



\bibliographystyle{plain}

\end{document}